\begin{document}

\newtheorem {thm}{Theorem}[section]
\newtheorem{corr}[thm]{Corollary}
\newtheorem {cl}[thm]{Claim}
\newtheorem*{thmstar}{Theorem}
\newtheorem{prop}[thm]{Proposition}
\newtheorem*{propstar}{Proposition}
\newtheorem {lem}[thm]{Lemma}
\newtheorem {fact}[thm]{Fact}
\newtheorem*{lemstar}{Lemma}
\newtheorem{conj}[thm]{Conjecture}
\newtheorem{question}[thm]{Question}
\newtheorem*{questar}{Question}
\newtheorem{ques}[thm]{Question}
\newtheorem{prob}[thm]{Problem}
\newtheorem*{conjstar}{Conjecture}
\theoremstyle{remark}
\newtheorem{rem}[thm]{Remark}
\newtheorem{np*}{Non-Proof}
\newtheorem*{remstar}{Remark}
\theoremstyle{definition}
\newtheorem{defn}[thm]{Definition}
\newtheorem*{defnstar}{Definition}
\newtheorem{exam}[thm]{Example}
\newtheorem*{examstar}{Example}
\newcommand{\pd}[2]{\frac{\partial #1}{\partial #2}}
\newcommand{\pdtwo}[2]{\frac{\partial^2 #1}{\partial #2^2}}
\def\Ind{\setbox0=\hbox{$x$}\kern\wd0\hbox to 0pt{\hss$\mid$\hss} \lower.9\ht0\hbox to 0pt{\hss$\smile$\hss}\kern\wd0}
\def\Notind{\setbox0=\hbox{$x$}\kern\wd0\hbox to 0pt{\mathchardef \nn=12854\hss$\nn$\kern1.4\wd0\hss}\hbox to 0pt{\hss$\mid$\hss}\lower.9\ht0 \hbox to 0pt{\hss$\smile$\hss}\kern\wd0}
\def\ind{\mathop{\mathpalette\Ind{}}}
\def\nind{\mathop{\mathpalette\Notind{}}} 
\newcommand{\m}{\mathbb }
\newcommand{\mc}{\mathcal }
\newcommand{\mf}{\mathfrak }

\def \d {\delta}
\def \NN {\mathbb N}
\def \QQ {\mathbb Q}
\def \ZZ {\mathbb Z}
\def \et {d_{D/\D}}
\def \w {\omega}
\def \D {\Delta}
\def \I {\mathcal{I}}

\title[On linear dependence and completeness]{On linear dependence over complete differential algebraic varieties}
\author[James Freitag]{James Freitag*} \thanks{*This material is based upon work supported by an American Mathematical Society Mathematical Research Communities award and the National Science Foundation Mathematical Sciences Postdoctoral Research Fellowship, award number 1204510.} 
\author[Omar Le\'on S\'anchez]{Omar Le\'on S\'anchez}  
\author[William Simmons]{William Simmons**} \thanks{**This material is based upon work supported by an American Mathematical Society Mathematical Research Communities award.}
\address{freitag@math.berkeley.edu \\
Department of Mathematics\\
University of California, Berkeley\\
970 Evans Hall\\
Berkeley, CA 94720-3840 }
\address{oleonsan@math.mcmaster.ca \\
Department of Mathematics and Statistics \\
McMaster University \\
1280 Main St W \\
Hamilton, ON L8S 4L8}
\address{simmons@math.ucla.edu \\
Mathematics Department\\
University of California, Los Angeles\\
Math Sciences Building 6363\\
Los Angeles, CA 90095}
\date{\today}

\begin{abstract}
We extend Kolchin's results from \cite{KolchinDiffComp} on linear dependence over projective varieties in the constants, to linear dependence over arbitrary complete differential varieties. We show that in this more general setting, the notion of linear dependence still has necessary and sufficient conditions given by the vanishing of a certain system of differential-polynomials equations. We also discuss some conjectural questions around completeness and the catenary problem.
\end{abstract}

\maketitle

\begin{center}
\it{Keywords: differential algebraic geometry, model theory} \\
\it{AMS 2010 Mathematics Subject Classification: 12H05. 03C98}
\end{center}

\section{Introduction}

The study of complete differential algebraic varieties began in the 1960's during the development of differential algebraic geometry, and received the attention of various authors over the next several decades \cite{MorrisonSD, BlumComplete, BlumExtensions, KolchinDiffComp}. On the other hand, even though model theory had significant early interactions with differential algebra, it was not until recently that the topic has been the subject of various works using the model-theoretic perspective \cite{PongDiffComplete2000, DeltaCompleteness, SimmonsThesis, PillayDvar}.

Up until the last couple of years, relatively few examples of complete differential algebraic varieties were known. The development of a valuative criterion for completeness led to a variety of new examples, see for instance \cite{PongDiffComplete2000} and \cite{DeltaCompleteness}. Subsequently, more examples have been discovered \cite{SimmonsThesis} using various algebraic techniques in conjunction with the valuative criterion (in Section \ref{compbound} we present an example of the third author's, which shows that there are zero-dimensional projective differential algebraic varieties which are not complete). 

To verify that a given projective differential variety $V$ is complete one has to verify that for any quasiprojective differential variety $Y$, the second projection: 
\begin{equation}\label{diag}
\pi_2 \colon V \times Y \to Y
\end{equation}
is a closed map. After reviewing some basic facts on completeness in Section \ref{compbound}, we establish, by means of the differential-algebraic version of Bertini's theorem, that in order to verify completeness it suffices to check that the above projection maps are \emph{semi-closed}.

Differential completeness is a fundamental notion in differential algebraic geometry, but, except for \cite{KolchinDiffComp}, there has been no discussion of applications of the idea (outside of foundational issues). In Section \ref{lindep}, we consider the notion of linear dependence over an arbitrary projective differential variety. This is a generalization of a notion studied by Kolchin \cite{KolchinDiffComp} in the case of projective algebraic varieties, which in turn generalizes linear dependence in the traditional sense. We prove several results extending the work in \cite{KolchinDiffComp}; for instance, we see that this general notion of linear dependence also has necessary and sufficient conditions given by the vanishing of differential algebraic equations (when working over a complete differential variety).

In the case of the field of meromorphic functions (on some domain of $\m C$) and the projective variety $\m P^n (\m C)$, Kolchin's results \cite{KolchinDiffComp} specialize to the classical result: any finite collection of meromorphic functions is linearly dependent over $\m C$ if and only if the Wronskian determinant of the collection vanishes. There are generalizations of this in several directions; for instance, in the context of multiple variables (i.e., partial differential fields), fully general results on Wronskians and linear dependence of meromorphic functions are relatively recent. Roth \cite{Roth} first established these type of results in the case of rational functions in several variables for use in diophantine approximation. Later his results were generalized to meromorphic functions in some domain of $\m C^m$ via \cite{Wolsson} and \cite{BerensteinChangLi}. It is worth noting that the proofs of these results are analytic in nature. 

In Section \ref{gene}, we point out how our results on linear dependence over arbitrary complete differential varieties generalize the above results in two essential ways: the differential field is not assumed to be a field of meromorphic functions and the linear dependence is considered over an arbitrary solution set to some differential equations (rather than over $\m C^n$). 

In \cite{DeltaCompleteness}, it was established that every complete differential variety is \emph{zero-dimensional} (earlier, this result was established in the ordinary case \cite{PongDiffComplete2000}). Thus, it is natural to ask the following question:
 
\begin{questar} 
To verify completeness, can one restrict to taking products of the given differential variety with zero-dimensional differential varieties? In other words, is it enough to only consider zero-dimensional varieties $Y$ in (\ref{diag})?
\end{questar} 

Although we are not able to give a full answer to this question, we show that it has a positive answer under the additional assumption of the \emph{weak catenary conjecture}. The conjecture is itself a very natural problem in differential algebraic geometry, and the conditional answer to the above question helps motivate the conjecture further. This weak catenary-type conjecture is an easy consequence of the \emph{Kolchin catenary conjecture}, which has been verified in numerous cases, but not in entire generality. 

Section \ref{catsection} is intended partly as a survey on the progress of the catenary conjecture, and partly as an opportunity to pose stronger forms of the conjecture that are interesting in their own right. More precisely, after discussing the catenary problem, we formulate a stronger version for algebraic varieties and show the equivalence of this stronger version to certain maps of prolongation spaces being open. This gives the equivalence of these strong forms of the Kolchin catenary problem to a problem purely in the realm of scheme theory. The proof of the equivalence uses recent work of Trushin \cite{Trushin} on a transfer principal between the Kolchin and Zariski topology called \emph{inheritance}.

\
\noindent {\bf Acknowledgements.} The authors began this work during a visit to the University of Waterloo, which was made possible by a travel grant from the American Mathematical Society through the Mathematical Research Communities program. We gratefully acknowledge this support which made the collaboration possible. We would also like to thank Rahim Moosa for numerous useful conversations during that visit and afterwards.

\section{Projective differential algebraic varieties} 

In this section we review the basic notions and some stantard results on (projective) differential algebraic geometry. For a thorough development of the subject see \cite{KolchinDAAG} or \cite{KolchinDiffComp}. We fix a differentially closed field $(\mathcal{U},\Delta)$ of characteristic zero, where $$\Delta=\{\delta_1,\dots,\delta_m\}$$ is the set of $m$ commuting derivations. We assume $\mathcal{U}$ to be a universal domain for differential algebraic geometry; in model-theoretic terms, we are simply assuming that $(\mc U,\Delta)$ is a sufficiently large saturated model of the theory $DCF_{0,m}$. Throughout $K$ will denote a (small) differential subfield of $\mathcal{U}$. 

A subset of $\m A^n=\m A^n(\mathcal U)$ is \emph{$\Delta$-closed}, or simply closed when the context is clear, if it is the zero set of a collection of $\Delta$-polynomials over $\mc U$ in $n$ differential indeterminates (these sets are also called \emph{affine} differential algebraic varieties).  When the collection of $\Delta$-polynomials defining a $\Delta$-closed set is over $K$, we say that the $\Delta$-closed is defined over $K$. 

Following the standard convention, we will use $K \{ y_0, y_1 , \ldots , y_n \}$ to denote the ring of \emph{$\Delta$-polynomials} over $K$ in the $\Delta$-indeterminates $y_0, y_1, \ldots , y_n$.

\begin{defn}  A (non-constant) $\Delta$-polynomial $f$ in $K\{ y_0 , \ldots , y_n \}$ is \emph{$\Delta$-homogeneous of degree d} if 
$$f(t y_0, \ldots t y_n \} = t^d f(y_0 , \ldots , y_n ),$$ where $t$ is another $\Delta$-indeterminate. 
\end{defn}

The reader should note that $\Delta$-homogeneity is a stronger notion that homogeneity of a differential polynomial as a polynomial in the algebraic indeterminates $\delta_m^{r_m}\cdots\delta_1^{r_1}y_i$. For instance, for any $\delta\in \Delta$, $$\delta y - y$$ is a homogeneous $\Delta$-polynomial, but not a $\Delta$-homogeneous $\Delta$-polynomial. The reader may verify that the following is $\Delta$-homogeneous:
$$y_1 \delta y_0  -y_0 \delta y_1 -y_0y_1.$$

Generally, we can easily homogenize an arbitrary $\Delta$-polynomial in $y_1,\dots,y_n$ with respect to a new $\Delta$-variable $y_0$. Let $f$ be a $\Delta$-polynomial in $K\{y_1,\dots,y_n\}$, then for $d$ sufficiently large $y_0^d f(\frac{y_1}{y_0},\dots,\frac{y_n}{y_0})$ is $\Delta$-homogeneous of degree $d$. For more details and examples see \cite{PongDiffComplete2000}.

As a consequence of the definition, the vanishing of $\Delta$-homogeneous $\Delta$-polynomials in $n+1$ variables is well-defined on $\m P^n=\m P^n(\mathcal U)$. In general, the $\Delta$-closed subsets of $\m P^n$ defined over $K$ are the zero sets of collections of $\Delta$-homogeneous $\Delta$-polynomials in $K \{y_0 , \ldots , y_n \}$ (also called \emph{projective} differential algebraic varieties). Furthermore, $\Delta$-closed subsets of $\m P^n  \times \m A^m$, defined over $K$, are given by the zero sets of collections of $\Delta$-polynomials in $$K \{ y_0 , \ldots , y_n, z_1, \ldots , z_m \}$$ which are $\Delta$-homogeneous in $(y_0,\dots,y_n)$.

\subsection{Dimension polynomials for projective differential algebraic varieties} \label{dimpol}

Take $\alpha \in \m P ^n$ and let $\bar a = ( a_0 , \ldots , a_n ) \in \m A ^{n+1}$ be a representative for $\alpha$. Choose some index $i$ for which $a_i \neq 0$. The field extensions $K\left(\frac{a_0}{a_i},\ldots,\frac{a_n}{a_i}\right)$ and $K \left\langle \frac{a_0}{a_i},\ldots,\frac{a_n}{a_i} \right\rangle$ do not depend on which representative $\bar a$ or index $i$ we choose. Here $K\langle \bar a\rangle$ denotes the $\Delta$-field generated by $\bar a$ over $K$.

\begin{defn} With the notation of the above paragraph, the {\it Kolchin polynomial of $\alpha$ over $K$} is defined as
$$\omega _{ \alpha /K } (t) = \omega _{\left(\frac{a_0}{a_i}, \dots,\frac{a_n}{a_i}\right) /K} (t) ,$$
where $\omega _{\left(\frac{a_0}{a_i}, \dots,\frac{a_n}{a_i}\right) /K} (t)$ is the standard Kolchin polynomial of $\left(\frac{a_0}{a_i}, \dots,\frac{a_n}{a_i}\right)$ over $K$ (see Chapter II of \cite{KolchinDAAG}). The \emph{$\Delta$-type of $\alpha$} over $K$ is defined to be the degree of $\omega_{\alpha/K}$. By the above remarks, these two notions are well-defined; i.e., they are independent of the choice of representative $\bar a$ and index $i$.
\end{defn} 

Let $\beta \in \m P^n$ be such that the closure (in the $\Delta$-topology) of $\beta$ over $K$ is contained in the closure of $\alpha$ over $K$. In this case we say that $\beta$ is a \emph{differential specialization} of $\alpha$ over $K$ and denote it by $\alpha \mapsto_K \beta$. Let $\bar b$ be a representative for $\beta$ with $b_i \neq 0$. Then, by our choice of $\beta$ and $\alpha$, if $\bar a $ is a representative of $\alpha $, then $a_i \neq 0$; moreover, the tuple $\left(\frac{b_0}{b_i},\dots,\frac{b_n}{b_i}\right)$ in $\mathbb{A}^{n+1}$ is a differential specialization of $\left(\frac{a_0}{a_i}, \dots,\frac{a_n}{a_i}\right)$ over $K$. When $V \subseteq \m P^n$ is an irreducible $\Delta$-closed set over $K$, then a \emph{generic point} of $V$ over $K$ (when $K$ is understood we will simply say generic) is simply a point $\alpha\in V$ for which $V=\{\beta \, | \, \alpha \mapsto_K \beta \}$. It follows, from the affine case, that every irreducible $\Delta$-closed set in $\m P^n$ has a generic point 
over $K$, and that any two such generics have the same isomorphism type 
over $K$. 

\begin{defn} Let $V\subseteq \m P^n$ be an irreducible $\Delta$-closed set. 
The {\it Kolchin polynomial} of $V$ is defined to be $$\omega _{ V} (t) = \omega_ {\alpha /F} (t),$$ where $F$ is any differential field over which $V$ is defined and $\alpha$ is a generic point of $V$ over $F$. It follows, from the affine case, that $\omega_V$ does not depend on the choice of $F$ or $\alpha$. The \emph{$\Delta$-type of $V$} is defined to be the degree of $\omega_V$.
\end{defn} 

\begin{rem} Let $V\subseteq \m P^n$ be an irreducible $\Delta$-closed set, and recall that $m$ is the number of derivations.
\begin{itemize}
\item [(i)] $V$ has $\Delta$-type $m$ if and only if the differential function field of $V$ has positive differential transcendence degree.
\item [(ii)] $V$ has $\Delta$-type zero if and only if the differential function field of $V$ has finite transcendence degree.
\end{itemize}
\end{rem}

The {\it dimension} of $V$, denoted by $\operatorname{dim}V$, is the differential transcendence degree of the differential function field of $V$. Thus, by a zero-dimensional differential variety we mean one of $\Delta$-type less than $m$ (in model-theoretic terms this is equivalent to the Lascar rank being less than $\omega^{m}$).
 
In various circumstances it is advantageous (and will be useful for us in Section \ref{lindep}) to consider $\m P^n$ as a quotient of $\m A^{n+1}$. For example, if $\mf p$ is the $\Delta$-ideal of $\Delta$-homogeneous $\Delta$-polynomials defining $V \subseteq \m P^{n}$ and we let $W \subseteq \m A^{n+1}$ be the zero set of $\mf p$, then 
\begin{equation}\label{use}
\omega _{W} (t)=\omega _ {V} (t) + \binom{m+t}{m},
\end{equation}
where the polynomial on the left is the standard Kolchin polynomial of $W$ (see \S5 of \cite{KolchinDiffComp}).

\section{On completeness}\label{compbound}

In this section we  recall a few facts and prove some foundational results on complete differential algebraic varieties (for more basic properties we refer the reader to \cite{DeltaCompleteness}). We start by recalling the definition of $\Delta$-completeness.

\begin{defn}\label{maindef}
A $\Delta$-closed $V \subseteq \m P^n$ is \emph{$\Delta$-complete} if the second projection $$\pi_2: V \times Y \rightarrow Y$$ is a $\Delta$-closed map for every quasiprojective differential variety $Y$. Recall that a quasiprojective differential variety is simply an open subset of a projective differential variety.
\end{defn}

We will simply say \emph{complete} rather than $\Delta$-complete. This should cause no confusion with the analogous term from the algebraic category because we will work exclusively in the category of differential algebraic varieties. 

The first differential varieties for which completeness was established were the constant points of projective algebraic varieties \cite{KolchinDiffComp}. One might attempt to establish a variety of examples via considering algebraic D-variety structures on projective algebraic varieties; in Lemma \ref{Dvarst} below, we prove that indeed the sharp points of an algebraic D-variety is a complete differential variety. 

Let us first recall that an algebraic D-variety is a pair $(V,\mathcal D)$ where $V$ is an algebraic variety and $\mathcal D$ is a set of $m$ commuting derivations on the structure sheaf $\mathcal O_V$ of $V$ extending $\Delta$. A point $v\in V$ is said to be a \emph{sharp point} of $(V, \mathcal D)$ if for every affine neighborhood $U$ of $v$ and $f\in \mathcal O_V(U)$ we have that $D(f)(v)=\delta(f(v))$ for all $D\in \mathcal D$ and $\delta \in \Delta$. The set of all sharp points of $V$ is denoted by $(V,\mathcal D)^\sharp$. It is worth noting that given an algebraic variety $W$ defined over the constants, one can equip $W$ with a canonical D-variety structure $\mathcal D_0$ (by setting each $D_i$ to be the unique extension of $\delta_i$ that vanishes on the affine coordinate functions) such that $(W,\mathcal D_0)^\sharp$ is precisely the set of constant points of $W$. We refer the reader to \cite{Buium1} for basic properties of algebraic D-varieties.

\begin{lem}\label{Dvarst}
If $(V,\mathcal D)$ is an algebraic D-variety whose underlying variety V is projective, then $(V,\mathcal D)^\#$ is a complete differential variety.
\end{lem}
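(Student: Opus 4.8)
The plan is to verify $\Delta$-completeness by means of the valuative criterion developed in \cite{PongDiffComplete2000, DeltaCompleteness}, which reduces the closedness of all the projections $\pi_2\colon (V,\mathcal D)^\sharp\times Y\to Y$ to a point-lifting property over $\Delta$-valuation rings, and then to exploit that the underlying variety $V$ is already proper as an algebraic variety. The guiding intuition is that the algebraic properness of $V$ supplies the specialization of points, while the global regularity of the $D$-structure guarantees that such specializations remain sharp.

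First I would record that $(V,\mathcal D)^\sharp$ is genuinely a $\Delta$-closed subset of $\m P^n$. Working in homogeneous coordinates and on each affine chart, the sharpness condition $D(f)(v)=\delta(f(v))$ for $D\in\mathcal D$, $\delta\in\Delta$, and $f$ a local section of $\mathcal O_V$ translates into a system of $\Delta$-polynomial equations in the coordinates; homogenizing as in the discussion following the definition of $\Delta$-homogeneity, and gluing across charts, exhibits $(V,\mathcal D)^\sharp$ as the zero set of a collection of $\Delta$-homogeneous $\Delta$-polynomials. Equivalently, if $\tau V\to V$ denotes the prolongation and $s\colon V\to\tau V$ the regular section determined by $\mathcal D$, then $(V,\mathcal D)^\sharp=\{v\in V:\nabla(v)=s(v)\}$, where $\nabla$ is the differential section of $\tau V\to V$; this is the description best suited to the valuative argument.

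Next I would set up the valuative test. Given a $\Delta$-valuation ring $(\mathcal O,\Delta)$ inside a differential field $F$, with maximal ideal $\mathfrak m$ and (differential) residue field $k$, together with an $F$-point $a$ of $(V,\mathcal D)^\sharp$, the algebraic valuative criterion for properness — applicable because $V$ is projective — extends $a$ to an $\mathcal O$-point whose reduction modulo $\mathfrak m$ is a $k$-point $\bar a$ of $V$. It then remains to check that $\bar a$ is again sharp. This should follow because the section $s$ is globally regular on the proper variety $V$, so $s(a)$ lies in $\tau V(\mathcal O)$ and reduces to $s(\bar a)$, while reduction modulo $\mathfrak m$ is a homomorphism of $\Delta$-rings and hence commutes with the functorially defined $\nabla$; thus the identity $\nabla(a)=s(a)$ descends to $\nabla(\bar a)=s(\bar a)$, placing $\bar a$ in $(V,\mathcal D)^\sharp$.

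I expect this last step — verifying that sharpness is preserved under the valuation-theoretic specialization — to be the main obstacle, since it is precisely here that the regularity of the $D$-structure on the \emph{complete} variety $V$, as opposed to a merely quasiprojective one, is essential: one must ensure that $\tau V$ and its section $s$ are defined integrally over $\mathcal O$ so that the defining equations of $(V,\mathcal D)^\sharp$ pass to the special fiber without picking up denominators. Once this compatibility between the $\Delta$-valuation and the $D$-structure is established, the valuative criterion yields that $(V,\mathcal D)^\sharp$ is complete, which specializes to Kolchin's case of constant points upon taking the canonical structure $\mathcal D_0$.
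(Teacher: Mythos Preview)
Your approach via the valuative criterion is sound and would go through, but it is genuinely different from the paper's proof. The paper does not touch the valuative criterion at all; instead it invokes a theorem of Buium \cite{Buium} that every algebraic D-variety with projective underlying variety is \emph{isotrivial}: there is an isomorphism of algebraic varieties $f\colon V\to W$ with $W$ defined over the constants such that $f$ carries $(V,\mathcal D)^\sharp$ onto the constant points of $W$. Completeness of $(V,\mathcal D)^\sharp$ then follows in one line from Kolchin's result that the constant points of a projective variety are $\Delta$-complete.

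The two routes trade off in the expected way. Your argument is more self-contained and more hands-on: it isolates exactly where projectivity of $V$ is used (the algebraic valuative criterion supplies the specialization, and global regularity of the section $s\colon V\to\tau V$ ensures the sharpness equation $\nabla=s$ has no denominators and therefore passes to the residue field). The paper's route is shorter once Buium is cited, and it actually proves more: $(V,\mathcal D)^\sharp$ is not merely $\Delta$-complete but is isomorphic, as a differential variety, to the constant points of a projective algebraic variety --- a structural fact your valuative argument does not see. Your identification of the ``main obstacle'' is accurate, and the compatibility of $\nabla$ and $s$ with reduction modulo the maximal $\Delta$-ideal is precisely what makes the step work; just take care to match the exact hypotheses on the local $\Delta$-rings appearing in the valuative criteria of \cite{PongDiffComplete2000, DeltaCompleteness}.
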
 
\begin{proof}
In \cite{Buium}, Buium proves that if $V$ is projective then $(V,\mathcal D)$ is isotrivial. That is, there is an isomorphism $f:V\to W$ of algebraic varieties with $W$ defined over the constants such that the image of $(V,\mathcal D)^\sharp$ under $f$ is precisely the set of constant points $W$. Thus, $(V,\mathcal D)^\sharp$ is isomorphic to a projective algebraic variety in the constants. The latter we know is complete, and hence $(V,\mathcal D)^\#$ is complete.
\end{proof}

We now recall a class of examples developed in \cite{PongDiffComplete2000}, which show the existence of complete differential varieties that are not isomorphic to algebraic varieties in the constants.

\begin{exam}\label{exam5}
Restrict to the case of a single derivation $(\mc U,\delta)$. Let $V$ be the $\delta$-closure of $\delta y= f(y)$ in $\m P^1,$ where $f(y) \in K[y]$ is of degree greater than one. In \cite{PongDiffComplete2000}, it was shown that $V$ is a complete differential variety. Under the additional assumption that $f(y)$ is over the constants, by a theorem of McGrail \cite[Theorem 2.8]{McGrail} and Rosenlitch \cite{notmin}, it is well understood when such a differential variety is non isomorphic to an algebraic variety in the constants: 

\begin{fact}\cite[page 71]{MMP}
Suppose that $f(y)$ is a rational function over the constants of a differential field $(K,\delta)$. Then $V = \{ x \in \m A^1 \, | \, \delta x = f(x) \}$ is isomorphic to an algebraic variety in the constants if and only if either: 
\begin{enumerate} 
\item  $\frac{1}{f(y)} = c \frac{\pd{u}{y}}{u}$ for some rational function $u$ over the constants and $c$ a constant.  
\item $\frac{1}{f(y)} = c \pd{v}{y}$ for some rational function $v$ over the constants and $c$ a constant. 
\end{enumerate} 
\end{fact}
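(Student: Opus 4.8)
The plan is to treat the two implications separately, with the forward (sufficiency) direction being an explicit first-integral computation and the converse being the substantive part. Throughout, let $\mc C$ denote the field of constants of $\mc U$ and let $x$ be a generic point of $V$, so that $\delta x = f(x)$ and the differential function field of $V$ is $K(x)$, with $\delta$ acting by $x\mapsto f(x)$. Recall (from the discussion of the canonical D-variety structure $\mc D_0$) that ``isomorphic to an algebraic variety in the constants'' means that $V$ is $\Delta$-isomorphic to the set of constant points $W(\mc C)$ of some algebraic variety $W$ defined over $\mc C$. Since $V$ is one-dimensional, $W$ is a curve, and the two displayed conditions will correspond to the two one-dimensional connected algebraic groups $\m G_a$ and $\m G_m$.

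For the sufficiency direction I would exhibit the isomorphism directly. Assuming condition (2), $\frac1f = c\,\pd{v}{y}$, I put $w = v(x)$; the chain rule gives $\delta w = \pd{v}{y}(x)\,f(x) = \tfrac1c$, so $w$ lies in the solution set of $\delta w = \tfrac1c$, a torsor under the additive group of $\mc C$. Fixing $t_0\in\mc U$ with $\delta t_0 = 1$ (which exists since $\mc U$ is differentially closed), the map $x\mapsto c\,v(x) - t_0$ carries $V$ into $\mc C$ and witnesses the isomorphism onto an algebraic variety in the constants. Assuming condition (1), $\frac1f = c\,\frac{\pd{u}{y}}{u}$, I put $w = u(x)$; then $\delta w = \pd{u}{y}(x)\,f(x) = \tfrac1c\,u(x) = \tfrac1c w$, so $w$ satisfies $\delta w = \tfrac1c w$, whose solution set is a torsor under the multiplicative group of $\mc C$, and dividing by a fixed solution again produces the isomorphism. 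In each case one checks that the resulting differential-rational map is an isomorphism onto its image, a $\Delta$-closed subset of a group in the constants.

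For the necessity direction, suppose $V$ is isomorphic to $W(\mc C)$. This means $K(x)$ is generated over $K$ by constants, so there is a nonconstant first integral, i.e. an element $g \in \overline{K(x)}$ with $\delta g = 0$ that is transcendental over $K$; equivalently, separation of variables produces the ``time'' function, and one is led to analyze $\int \frac{dy}{f(y)}$. By Rosenlicht's theorem on integrals of rational functions, this integral decomposes as
\[
\int \frac{dy}{f(y)} = v(y) + \sum_{i} \lambda_i \log(y - a_i),
\]
with $v$ rational over $\mc C$, the $a_i$ the poles of $\tfrac1f$, and $\lambda_i$ the corresponding residues. The existence of a first integral lying in a field generated over $K$ by constants forces this expression to be either purely rational or purely logarithmic with commensurable residues: if every $\lambda_i = 0$ the form $\frac{dy}{f(y)}$ is exact, giving $\frac1f = \pd{v}{y}$ and hence condition (2); if instead the $\lambda_i$ are all rational multiples of a single constant and there is no rational part, then $\exp\!\big(\tfrac1c \int \tfrac{dy}{f}\big) = \prod_i (y-a_i)^{n_i} =: u(y)$ is rational and $\frac1f = c\,\frac{\pd{u}{y}}{u}$, giving condition (1).

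The main obstacle is precisely this last step of the converse: showing that being $\Delta$-isomorphic to a variety in the constants forces the residue dichotomy above, and in particular ruling out incommensurable residues or mixed rational-and-logarithmic behavior. This is where the real content of Rosenlicht's theorem enters, together with the fact that for a rank-one $\Delta$-variety non-orthogonality to the constants yields an honest first integral to which the residue analysis can be applied. By contrast, the sufficiency direction and the identification of the two surviving cases with the additive and multiplicative groups are routine once the first integral is in hand.
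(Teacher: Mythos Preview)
The paper does not prove this statement; it is recorded as a cited Fact from \cite[page 71]{MMP}, attributed there to results of McGrail and Rosenlicht, and used only as a black box to produce examples. So there is no proof in the paper to compare your proposal against.

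On the proposal itself: the overall architecture---first integrals, residue analysis of $\int \frac{dy}{f(y)}$, and identification of the two cases with $\m G_a$ and $\m G_m$---is indeed Rosenlicht's approach. However, your sufficiency direction has a gap you pass over too quickly. The maps $x\mapsto v(x)$ and $x\mapsto u(x)$ are rational functions of some degree $d$; unless $d=1$ they are generically $d$-to-one on $V$, so what you have actually produced is a \emph{finite cover} of (a cofinite piece of) $\m A^1(\mc C)$, not an isomorphism. The sentence ``one checks that the resulting differential-rational map is an isomorphism onto its image'' is precisely where the work is, and it is not a routine check. What the first-integral computation genuinely buys is that $V$ is \emph{internal to} the constants; upgrading internality to a definable bijection with a constructible set in the constants uses that $\mc C$ is stably embedded, has elimination of imaginaries, and is a pure algebraically closed field---a standard but nontrivial model-theoretic step that should be stated explicitly rather than absorbed into ``one checks''.

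For the converse you have correctly located the substantive content in Rosenlicht's residue dichotomy and have honestly flagged it as the main obstacle; there is nothing further to add beyond noting that a complete write-up would need to actually carry out that residue argument (or cite it precisely) rather than sketch its conclusion.
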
 
The previous two results yield a large class of complete differential varieties which are non isomorphic to an algebraic variety in the constants. Beyond order one, nonlinear equations are rather difficult to analyze with respect to completeness, because the valuative criteria developed in \cite{PongDiffComplete2000, DeltaCompleteness} are difficult to apply. For such examples, we refer the reader to \cite{SimmonsThesis}. 
\end{exam} 

Note that in all the above examples of complete differential varieties, the varieties are zero-dimensional. This is generally true; in \cite{DeltaCompleteness} the following was established: 

\begin{fact}\label{zerodim} 
Every complete differential variety is zero-dimensional.
\end{fact}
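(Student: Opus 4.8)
The plan is to prove the contrapositive: if a $\Delta$-closed $V \subseteq \mathbb{P}^n$ has positive dimension (equivalently, by the preceding remark, $\Delta$-type $m$, i.e. its differential function field has positive differential transcendence degree), then $V$ is not complete. I would rely on two ingredients. First, the elementary fact that completeness is inherited by $\Delta$-closed images: if $V$ is complete and $g \colon V \to W$ is a $\Delta$-morphism into a quasiprojective $W$, then $g(V)$ is $\Delta$-closed and complete. Closedness is immediate by applying completeness of $V$ to the graph $\Gamma_g \subseteq V \times W$ and projecting, and completeness of the image follows by pulling a test-closed set $T \subseteq g(V) \times Y'$ back along $g \times \mathrm{id}$ and noting that its projection to $Y'$ coincides with that of $T$. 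Second, the valuative criterion for $\Delta$-completeness developed in \cite{PongDiffComplete2000, DeltaCompleteness}, which characterizes completeness by the requirement that suitable $F$-points of $V$ extend to $R$-points for differential valuation rings $R \subseteq F$.

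Assume $\dim V \geq 1$ and let $\alpha$ be a generic point of $V$ over $K$. Passing to an affine representative, positivity of the differential transcendence degree produces an element $u$ of the differential function field $K\langle V\rangle$ that is differentially transcendental over $K$. The heart of the argument is to manufacture a $\Delta$-valuation that sees a pole of $u$: because $u$ satisfies no $\Delta$-polynomial relation over $K$, the subfield $K\langle u\rangle$ should carry a $\Delta$-valuation $v$ with $v(u) < 0$, and I would extend $v$ to a $\Delta$-valuation ring $R$ of $F = K\langle V\rangle$. I would then check that, in homogeneous coordinates, the generic point $\alpha$ cannot be represented by a tuple that is integral at $R$ with a unit coordinate, since the coordinate carrying $u$ forces a genuine pole; hence $\alpha$ fails the valuative criterion and $V$ is not complete.

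The point at which positive dimension is essential, and where I expect the main obstacle, is precisely the construction of this witnessing $\Delta$-valuation and the verification that $\alpha$ does not extend. In the zero-dimensional case every element of $K\langle V\rangle$ is differentially algebraic over $K$, and this differential-algebraicity should force the relevant $\Delta$-valuations to be bounded, so that no pole can be introduced and the criterion is automatically satisfied; this is the differential analogue of the fact that elements integral over the base lie in every valuation ring. Thus the entire weight of the theorem rests on showing that a single differentially transcendental direction already breaks the valuative criterion, and making this rigorous, namely building $R$ compatibly with the derivations in $\Delta$ and controlling the homogeneous representatives of $\alpha$, is the delicate technical step.

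For intuition, and as an alternative organization, one can combine the image-inheritance fact with the observation that the differential affine line is \emph{not} complete: the $\Delta$-closed set $\{(x,y) : xy = 1\} \subseteq \mathbb{A}^1 \times \mathbb{A}^1$ projects onto $\{y : y \neq 0\}$, which is not $\Delta$-closed. A generic coordinate then yields a dominant $\Delta$-rational map $V \dashrightarrow \mathbb{A}^1$, and one would like to transport the failure of completeness back to $V$. The obstruction appears here in a different guise: the map is only rational, so one must pass to the closure of its graph in $V \times \mathbb{P}^1$ and argue that the non-closed projection survives, which again reduces to the valuative computation above. It is worth emphasizing that this is exactly the phenomenon separating the differential category from the algebraic one, where projective space is complete in every dimension: in the differential setting a single differentially transcendental parameter already destroys completeness, which is why only zero-dimensional varieties can be complete.
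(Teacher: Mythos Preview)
The paper does not prove this statement: Fact~\ref{zerodim} is simply quoted from \cite{DeltaCompleteness} (with the ordinary case attributed to \cite{PongDiffComplete2000}), so there is no in-paper argument to compare your proposal against.

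Your plan is the right one and lines up with how the cited references proceed, but two of the steps you pass over are exactly where the work lies. First, in the valuative route, producing a $\Delta$-valuation on $K\langle u\rangle$ with $v(u)<0$ is easy when $u$ is differentially transcendental, but \emph{extending} it to a $\Delta$-valuation ring of the full differential function field $K\langle V\rangle$ is not automatic; this is precisely the input supplied by the differential valuation-theoretic machinery of \cite{MorrisonSD, BlumExtensions} that underpins the valuative criteria in \cite{PongDiffComplete2000, DeltaCompleteness}. Second, your sentence ``the coordinate carrying $u$ forces a genuine pole'' presumes $u$ is one of the projective coordinate ratios $a_k/a_j$, which you have not arranged; you should instead choose $u$ from among those ratios at the outset (positive differential transcendence degree guarantees some $a_k/a_j$ is differentially transcendental over $K$), after which the failure of the valuative criterion is immediate. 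Your alternative organization via image-inheritance and the incompleteness of $\mathbb{A}^1$ runs into the same issue in disguise: passing from the dominant $\Delta$-rational map $V\dashrightarrow\mathbb{A}^1$ to an honest witness of incompleteness again requires controlling what happens over the indeterminacy locus, i.e.\ the same valuative computation.
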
 

This implies that the completeness question in differential algebraic geometry is one which only makes sense to ask for zero-dimensional projective differential varieties. Thus, it seems natural to inquire whether the notion can be completely restricted to the realm of zero-dimensional differential varieties. More precisely, a priori, the definition of completeness requires quantification over all differential subvarieties of the product of $V$ with an arbitrary quasiprojective differential variety $Y$. In light of Fact~\ref{zerodim}, it seems logical to ask if one can restrict to zero-dimensional $Y$'s for the purposes of verifying completeness. We provide some insight on this question in Section \ref{catsection}; of course, the positive answer to such question would be helpful for answering the following:
\begin{ques} Which projective differential varieties are complete?
\end{ques}

In general the above question is rather difficult. As the following example shows, even at the level of $\Delta$-type zero one can find incomplete differential varieties.

\begin{exam}
Recently, the third author \cite{SimmonsThesis} constructed the first known example of a zero-dimensional projective differential algebraic variety which is not complete (in fact of differential type zero). Restrict to the case of a single derivation $(\mathcal{U},\delta)$. Consider the subset $W$ of $\mathbb{A}^{1}\times \mathbb{A}^{1}$ defined by $x''=x^{3}$ and $2yx^{4}-4y(x')^{2}=1$. One may check (by differentially homogenizing $x''=x^{3}$ and observing that the point at infinity of $\mathbb{P}^{1}$ does not lie on the resulting variety) that $x''=x^{3}$ is already a projective differential variety. Thus, $W$ is a $\delta$-closed subset of $\mathbb{P}^{1}\times \mathbb{A}^{1}$. A short argument (see \cite{SimmonsThesis} for details) establishes that $\pi_{2}(W)$ is the set $\{y\mid y'=0 \text{ and } y\neq 0\}$ which is not $\delta$-closed. 

Let us point out that this example works because the derivative of $2x^{4}-4(x')^{2}$ belongs to the $\delta$-ideal generated by $x''-x^{3}$ though $2x^{4}-4(x')^{2}$ itself does not. Since determining membership in a $\delta$-ideal is often difficult, it is quite possible that identifying complete differential varieties is inherently a hard problem. This highlights the need for reductions in the process of checking completeness.  
\end{exam}

We finish this section by proving that in the definition of completeness one can slightly weaken the requirement of the second projection maps being closed maps. In order to state our result, we need the following definition.

\begin{defn}
A morphism $f\colon X\to Y$ of differential algebraic varieties is said to be \emph{semi-closed}, if for every $\Delta$-closed subset $Z\subseteq X$ either $f(Z)$ is $\Delta$-closed or $f(Z)^{cl}\setminus f(Z)$ is positive-dimensional. Here $f(Z)^{cl}$ denotes the $\Delta$-closure of $f(Z)$.
\end{defn}

Note that if at least one of $X$ or $Y$ is zero-dimensional, then a morphism $f:X\to Y$ is semi-closed iff it is closed. 

Recall that a ($\Delta$-)\emph{generic hyperplane} of $\mathbb A^n$ is the zero set of a polynomial of the form 
$$f(y_1,\ldots,y_n)=a_0+a_1y_1+\cdots+a_ny_n,$$
where the $a_i$'s are $\Delta$-algebraically independent. We will make use of the following differential version of Bertini's theorem, which appears in \cite{JBertini}. 

\begin{fact} \label{Bertini} 
Let $V\subseteq \mathbb{A}^n$ be an irreducible differential variety of dimension $d$ with $d>1$, and let $H$ be a generic hyperplane of $\mathbb{A}^n$. Then $V\cap H$ is irreducible of dimension $d-1$, and its Kolchin polynomial is given by
$$\omega_{(V\cap H)}(t)=\omega_V (t)- \binom{m+t}{m} .$$
\end{fact}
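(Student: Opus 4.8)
The plan is to prove the theorem by producing an explicit generic point of $V\cap H$ and then reading off both the Kolchin polynomial identity and the dimension from it, deferring the delicate irreducibility statement to a Bertini-type argument on prolongation spaces. First I would fix a generic point $\eta=(\eta_1,\dots,\eta_n)$ of $V$ over $K$ and choose $a_1,\dots,a_n$ to be $\Delta$-algebraically independent over $K\langle\eta\rangle$. Putting $a_0=-\sum_{i=1}^n a_i\eta_i$ and $L=K\langle a_0,\dots,a_n\rangle$, the hyperplane $H\colon a_0+\sum_i a_iy_i=0$ is a generic hyperplane of $\mathbb{A}^n$ defined over $L$, and $\eta$ lies on $V\cap H$ by construction. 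The goal is then to show that $\eta$ is a \emph{generic} point of $V\cap H$ over $L$: granting this, $\omega_{(V\cap H)}(t)=\omega_{\eta/L}(t)$, and the computation below both yields the displayed formula and, by inspecting the leading term, shows $\dim(V\cap H)=d-1$.

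For the Kolchin polynomial I would compute, in two ways, the quantity
\[
N_t=\operatorname{tr.deg}_K K\big(\theta a_0,\theta a_1,\dots,\theta a_n,\theta\eta_1,\dots,\theta\eta_n \;:\; \operatorname{ord}\theta\le t\big),
\]
where $\theta$ ranges over the derivative operators of order at most $t$. On one hand, each $\theta a_0$ of order $\le t$ lies in $K(\theta a_i,\theta\eta_j:\operatorname{ord}\theta\le t)$ by the Leibniz rule, so these coordinates are redundant; since the $a_i$ are $\Delta$-independent from $\eta$, additivity of transcendence degree gives $N_t=n\binom{m+t}{m}+\omega_{\eta/K}(t)$. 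On the other hand, $(a_0,a_1,\dots,a_n)$ is itself $\Delta$-algebraically independent over $K$ — a generic linear combination $a_0$ of the coordinates of a point of positive differential transcendence degree is $\Delta$-transcendental over $K\langle a_1,\dots,a_n\rangle$ — so the same bookkeeping gives $N_t=(n+1)\binom{m+t}{m}+\omega_{\eta/L}(t)$. Equating the two expressions,
\[
\omega_{\eta/L}(t)=\omega_{\eta/K}(t)-\binom{m+t}{m}=\omega_V(t)-\binom{m+t}{m}.
\]

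Two points in this count deserve to be isolated as lemmas, and both rest on genericity of the $a_i$. The first is that $a_0$ is genuinely $\Delta$-transcendental over $K\langle a_1,\dots,a_n\rangle$: writing each $\theta a_0$ as an $L$-linear combination of the $\theta\eta_i$ whose top-order term is $\sum_i a_i\,\theta\eta_i$, one checks that for distinct $\theta$ these leading terms are independent, which forces $\omega_{a_0/K\langle a_1,\dots,a_n\rangle}(t)=\binom{m+t}{m}$; here $\dim V\ge 1$ guarantees enough independent derivatives $\theta\eta_i$ to carry this out. The second is that replacing the truncated coefficient field $K(\theta a_0,\dots,\theta a_n:\operatorname{ord}\theta\le t)$ by the full field $L$ imposes no new relation among the order-$\le t$ derivatives of $\eta$, since the higher derivatives $\theta a_0$ with $\operatorname{ord}\theta>t$ involve genuinely higher derivatives of $\eta$ and hence remain transcendental over the relevant field.

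The main obstacle is the irreducibility of $V\cap H$: one must show that $\eta$ is a generic point of the \emph{whole} variety $V\cap H$, and not merely of one among several components of dimension $d-1$. This is the exact analogue of the classical Bertini irreducibility theorem, and, as there, the hypothesis $d>1$ is indispensable. I would establish it by transferring to the prolongation (jet) spaces $\tau_\ell V$, whose dimensions are recorded by the Kolchin polynomial $\omega_V$: a generic $\Delta$-hyperplane induces at each level $\ell$ a system of generic algebraic linear sections of $\tau_\ell V$, to which the classical Bertini theorem applies, and the hypothesis $d>1$ is what keeps these sections irreducible and compatibly connected through the tower. Passing to the inverse limit of this system of irreducible algebraic sections then yields the irreducibility of $V\cap H$; alternatively, one can argue directly inside $DCF_{0,m}$ that the type of $\eta$ over $L$ is stationary with indecomposable locus. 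Making this transfer precise — in particular verifying that the genericity of $H$ is inherited uniformly by the induced sections of the $\tau_\ell V$ — is the technical heart of the argument.
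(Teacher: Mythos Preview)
The paper does not prove this statement. It is recorded as a \emph{Fact} and attributed to \cite{JBertini} (``the differential version of Bertini's theorem, which appears in \cite{JBertini}''), with no argument given in the present paper. So there is nothing here to compare your proposal against line by line.

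That said, a brief comment on your sketch. The Kolchin-polynomial computation via the double count of $N_t$ is the natural one, and your identification of the two subsidiary lemmas (that $a_0$ is $\Delta$-transcendental over $K\langle a_1,\dots,a_n\rangle$, and that passing from the truncated coefficient field to $L$ adds no relations among the order-$\le t$ derivatives of $\eta$) is exactly right; both are where the genericity of the $a_i$'s and the hypothesis $\dim V\ge 1$ enter. You are also correct that irreducibility is the substantive part and that $d>1$ is essential there. Your proposed route through prolongation spaces and classical Bertini is plausible in spirit, but as written it is only a strategy: the assertion that a generic $\Delta$-hyperplane induces, at each level $\ell$, a system of \emph{generic} algebraic linear sections of $\tau_\ell V$ to which the classical theorem applies uniformly, and that irreducibility passes cleanly to the inverse limit, is precisely the content that needs to be supplied. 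Since the paper simply imports the result from \cite{JBertini}, if you want a self-contained argument you should consult that reference for how the irreducibility step is actually carried out.
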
 

We can now prove

\begin{prop}\label{mainthm} 
A $\Delta$-closed $V \subseteq \m P^n$ is complete if and only if $\pi_2: V \times Y \rightarrow Y$ is a semi-closed map for every quasiprojective differential variety $Y.$  
\end{prop}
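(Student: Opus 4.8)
The plan is to prove the nontrivial implication by a Bertini descent on the dimension of the \emph{defect} of a hypothetical counterexample. The forward direction is immediate: if $V$ is complete then every $\pi_2\colon V\times Y\to Y$ is closed, and a closed map is trivially semi-closed (the first alternative in the definition always holds). So I would assume that $\pi_2$ is semi-closed for every quasiprojective $Y$ and argue, by contradiction, that it must in fact be closed for every such $Y$. Suppose not; then there is a quasiprojective $Y$ and a $\Delta$-closed $Z\subseteq V\times Y$ with $\pi_2(Z)$ not $\Delta$-closed. Writing $W=\pi_2(Z)^{cl}$ and $D=W\setminus\pi_2(Z)$ for the defect (a constructible set, by quantifier elimination for $DCF_{0,m}$), semi-closedness forces $\dim D\geq 1$. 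Among all such counterexamples I would choose one minimizing $\dim D$, and after passing to the component of $W$ on which the defect is top-dimensional, I may assume that the relevant irreducible component $\bar D_0$ of $D^{cl}$ has dimension $d=\dim D\geq 1$.

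The core step is to cut the target by a generic hyperplane. Working in an affine chart of the ambient space of $Y$ containing a generic point of $\bar D_0$, let $H$ be a $\Delta$-generic hyperplane and set $Y'=Y\cap H$ and $Z'=Z\cap\pi_2^{-1}(H)$. Then $Y'$ is again quasiprojective and $Z'$ is $\Delta$-closed in $V\times Y'$, so $(Y',Z')$ is a legitimate instance of the projection. Since $\pi_2(Z')=\pi_2(Z)\cap H$, I would check that for generic $H$ the section $\pi_2(Z)\cap H$ remains dense in $W\cap H$, so that $\pi_2(Z')^{cl}=W\cap H$; a direct set computation then shows that the new defect is exactly $(W\cap H)\setminus(\pi_2(Z)\cap H)=D\cap H$. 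The point is that $p\in W\cap H$ fails to lie in $\pi_2(Z')=\pi_2(Z)\cap H$ precisely when $p\notin\pi_2(Z)$, that is, when $p\in D\cap H$.

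Now I would invoke the differential Bertini theorem, Fact~\ref{Bertini}: applied to $\bar D_0$ (of dimension $d>1$) it yields that $\bar D_0\cap H$ is irreducible of dimension $d-1$, and combined with the density just noted this gives $\dim(D\cap H)=d-1$. Thus $(Y',Z')$ is a counterexample whose defect has strictly smaller dimension, contradicting minimality once $d>1$; and when $d=1$ the section $D\cap H$ is nonempty of dimension $0$, so $\pi_2(Z')$ is not closed while its defect is zero-dimensional, contradicting semi-closedness of $\pi_2\colon V\times Y'\to Y'$ outright. Either way we reach a contradiction, so no counterexample exists and $V$ is complete. The main obstacle I anticipate is the genericity bookkeeping in the middle paragraph: verifying that a generic hyperplane section of the constructible image $\pi_2(Z)$ stays dense in the corresponding section of its closure (so that the defect really transforms as $D\mapsto D\cap H$ rather than growing), and ensuring nonemptiness of $D\cap H$ in the transition from dimension one to dimension zero, where Fact~\ref{Bertini} as stated does not directly apply and one must argue separately that a $\Delta$-generic hyperplane meets the positive-dimensional set $\bar D_0$.
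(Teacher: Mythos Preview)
Your proposal is correct and follows essentially the same Bertini-descent strategy as the paper: both arguments reduce a hypothetical counterexample by intersecting the target with a $\Delta$-generic hyperplane and tracking the drop in dimension of the defect $\pi_2(Z)^{cl}\setminus\pi_2(Z)$ until it becomes zero-dimensional, contradicting semi-closedness. The only cosmetic differences are that you package the descent as a minimal-counterexample argument and verify non-closedness of the sliced image via a density claim, whereas the paper iterates explicitly and uses irreducibility of $\pi_2(X_1)^{cl}\cap H$ together with the covering $\pi_2(X_1)^{cl}\cap H=(\pi_2(X_1)\cap H)\cup(W_1\cap H)$; you have also correctly flagged the two genericity subtleties (density of the constructible image under slicing, and the $d=1$ endpoint where Fact~\ref{Bertini} does not give irreducibility) that the paper handles somewhat tersely.
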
 
\begin{proof} 
Towards a contradiction, suppose that $\pi_2:V\times Y\to Y$ is semi-closed for every quasiprojective $Y$, but $V$ is not complete. Then there must be some $Y_1$, a positive dimensional quasiprojective differential variety, such that $\pi_2: V \times Y_1 \rightarrow Y_1$ is not closed. Because the question is local, we can assume that $Y_1$ is affine and irreducible. Let $X_1 \subseteq V \times Y_1$ be a $\Delta$-closed set such that $\pi_2 (X)$ is not closed. We may assume that $X_1$ is irreducible and, because of our semi-closedness assumption, that $\pi_2(X_1)^{cl}\setminus \pi_2(X_1)$ is positive dimensional. Let $W_1$ be the $\Delta$-closure of $\pi_2(X_1)^{cl}\setminus \pi_2(X_1)$. Note that $W_1$ is positive dimensional and, since the theory $DCF_{0,m}$ admits quantifier elimination, it has strictly smaller Kolchin polynomial than $\pi(X_1)^{cl}$.

Now, let $H$ be a generic hyperplane (generic over a differentially closed field over which everything else is defined) in $\m A^n$, where $n$ is such that $Y_1\subseteq \m A^n$. By Fact \ref{Bertini}, $W_1\cap H\neq \emptyset$. Now let $Y_2=Y_1\cap H$ and consider 
$$X_2:= X_1 \cap (V \times Y_2) \subseteq V \times Y_2.$$ 
We claim that $\pi_2 (X_2)=\pi_2(X_1)\cap H$ is not closed. Suppose it is, then, as $W_2:= W_1\cap H\neq \emptyset$, $\pi_2(X_1)\cap H$ is a closed proper subset of $\pi_2(X_1)^{cl}\cap H$. 
The fact that $\emptyset \neq W_1\cap H\subset \pi_{2}(X_1)^{cl}\cap H$ (which follows from Fact \ref{Bertini} and noting that Kolchin polynomials behave predictably with respect to intersections with generic hyperplanes) contradicts irreducibility of $\pi_2(X_1)^{cl}\cap H$. Thus, $\pi(X_2)=\pi_2(X)\cap H$ is not closed. Further, by Fact \ref{Bertini} again, the dimension of $Y_2$, $X_2$ and $W_2$ is one less that the dimension of $Y_1$, $X_1$ and $W_1$, respectively. 

Since this process decreases the dimensions, after a finite number of steps it would yield $Y$, $X$ and $W\neq \emptyset$, with $X$ a closed subset of $V\times Y$ such that $W=\pi_2(X)^{cl}\setminus\pi_2(X)$ is zero-dimensional. This contradicts semi-closedness, and the result follows.
\end{proof}

\section{Linear dependence over differential algebraic varieties}\label{lindep}

In this section we extend Kolchin's results from \cite{KolchinDiffComp} on linear dependence over projective varieties in the constants, to linear dependence over arbitrary complete differential varieties. We begin by giving a natural definition of linear dependence over an arbitrary projective differential variety.

\begin{defn} Let $V \subseteq \m P^n$ be $\Delta$-closed. We say that $\bar a=(a_0,\ldots,a_{n})\in \m A^{n+1}$ is {\it linearly dependent over $V$} if there is a point $v = [v_0: \cdots : v_n]  \in V$ such that $\sum_{j=0}^n v_i a_i =0$. Similarly, we say that $\alpha\in\m P^n$ is linearly dependent over $V$ if there is a representative $\bar a$ of $\alpha$ such that $\bar a$ is linearly dependent over $V$. Note that it does not matter which particular representative of $\alpha$ or $v$ we choose when testing to see if $v$ witnesses the $V$-linear dependence of $\alpha$. 
\end{defn} 

In \cite{KolchinDiffComp}, Kolchin states the following problem:
\begin{exam} \label{Mainexam} Consider an irreducible algebraic variety $V$ in $\m P^n(\m C)$ for some $n$. Let $f_0, f_1, \ldots , f_n$ be meromorphic functions in some region of $\m C$. Ritt once remarked (but seems to have not written down a proof) that there is an ordinary differential polynomial $R \in \m C \{y_0 , y_1, \ldots , y_n \}$ which depends only on $V$ and has order equal to the dimension of $V$ such that a necessary and sufficient condition that there is $c \in V$ with $ \sum c_i f_i =0$ is that $(f_0 , f_1 , \ldots ,f_n)$ be in the general solution of the differential equation $R(y_0 , y_1 , \ldots , y_n )=0$. For a more thorough discussion, see \cite{KolchinDiffComp}. 
\end{exam} 

It is natural to ask if in the previous example one can replace the algebraic variety $V$ with an arbitrary complete differential variety. Kolchin offers a solution to this question in the case when $V$ is an arbitrary projective differential variety living inside the constants. However, the fact that such projective varieties (viewed as zero-dimensional differential algebraic varieties) are complete in the $\Delta$-topology turns out to be the key to proving the existence of the above differential polynomial $R$. 

We will extend Kolchin's line of reasoning for proving the assertion in Example \ref{Mainexam}. Namely, we start with a complete differential algebraic variety, rather than the constant points of an algebraic variety. Recall that, as we pointed out in Example \ref{exam5}, there are many complete differential varieties that are not isomorphic to the constant points of an algebraic variety.

\begin{thm}\label{linear}
Let $V \subset \m P^n$ be a complete differential variety defined and irreducible over $K$. Let $$ld(V):= \{x \in \m P^n \, | \,  x \text{ is linearly dependent over } V \}.$$ Then $ld(V)$ is an irreducible differential subvariety of $\m P^n$ defined over $K$. 
\end{thm}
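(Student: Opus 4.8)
The plan is to realize $ld(V)$ as a projection of an incidence correspondence and to use completeness of $V$ to force this projection to have $\Delta$-closed image. Inside $V \times \m P^n$, where the second factor is the quasiprojective variety $Y$ of Definition~\ref{maindef}, I would consider the \emph{incidence variety}
$$\Gamma := \Big\{ (v,x) \in V \times \m P^n \ : \ \sum_{i=0}^{n} v_i x_i = 0 \Big\}.$$
The defining relation $\sum_i v_i x_i$ is bilinear, hence $\Delta$-homogeneous of degree one separately in $(v_0,\dots,v_n)$ and in $(x_0,\dots,x_n)$, so its vanishing is well defined on $\m P^n \times \m P^n$ and $\Gamma$ is a $\Delta$-closed subset of $V \times \m P^n$ defined over $K$. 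By construction $ld(V) = \pi_2(\Gamma)$, where $\pi_2 \colon V \times \m P^n \to \m P^n$ is the second projection. It then remains to establish three things: that $ld(V)$ is $\Delta$-closed, that it is defined over $K$, and that it is irreducible.

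Closedness is precisely where the hypothesis enters: since $V$ is complete and $\m P^n$ is quasiprojective, Definition~\ref{maindef} asserts that $\pi_2$ is a $\Delta$-closed map, so $\pi_2(\Gamma) = ld(V)$ is $\Delta$-closed. For the field of definition, $\Gamma$ is defined over $K$ and $\pi_2$ over $\QQ$, so $ld(V)$ is the image of a $K$-definable set under a $K$-definable map; since $DCF_{0,m}$ admits quantifier elimination, $ld(V)$ is $K$-definable, and being already $\Delta$-closed it is the zero set of $\Delta$-polynomials over $K$, i.e.\ defined over $K$.

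For irreducibility I would first reduce to irreducibility of $\Gamma$: a continuous image of an irreducible space is irreducible, so $\pi_2(\Gamma)$ is irreducible as a subspace of $\m P^n$, and being $\Delta$-closed it coincides with its closure and is thus an irreducible $\Delta$-closed set. To see that $\Gamma$ is irreducible I would analyze the first projection $\pi_1 \colon \Gamma \to V$. For every $v \in V$ the fiber $\pi_1^{-1}(v)$ is the hyperplane $\{x : \sum_i v_i x_i = 0\}\cong \m P^{n-1}$, so $\pi_1$ is surjective with irreducible fibers of constant dimension. Concretely, on the chart $v_0 \neq 0$ the assignment $(v; x_1:\cdots:x_n) \mapsto (v; x_0:x_1:\cdots:x_n)$ with $x_0 = -\sum_{i\geq 1}(v_i/v_0)x_i$ is an isomorphism from $(V \cap \{v_0\neq 0\}) \times \m P^{n-1}$ onto $\Gamma \cap \{v_0\neq 0\}$, exhibiting $\pi_1$ as a Zariski-locally trivial $\m P^{n-1}$-bundle. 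The analogous charts $\{v_i \neq 0\}$ cover $\Gamma$ by irreducible opens that pairwise meet (after discarding those $i$ with $V \subseteq \{v_i = 0\}$), so $\Gamma$ is irreducible. Equivalently, taking $v$ generic in $V$ over $K$ and $x$ generic in the fiber $H_v$ over $K\langle v\rangle$, the pair $(v,x)$ is a generic point of $\Gamma$ whose $\Delta$-closure over $K$ is all of $\Gamma$.

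The main obstacle is the irreducibility step, and within it the control of the fibration $\pi_1 \colon \Gamma \to V$: the closedness and field-of-definition claims are essentially immediate once $\Gamma$ is in hand, closedness being a direct invocation of completeness (which is the whole reason the hypothesis appears). Irreducibility, by contrast, reduces to the standard fact that a projective-space bundle with irreducible fibers over an irreducible base has irreducible total space, and the only genuinely fiddly points are the bookkeeping across the coordinate charts of $\m P^n$ (handled by density, since for generic $v$ some coordinate is nonzero) and the verification that the rational expression defining $x_0$ respects specialization (which it does because $v_0 \neq 0$ on the relevant chart). I expect these to be routine, so the substance of the argument is the passage from completeness to closedness together with the bundle structure that delivers irreducibility.
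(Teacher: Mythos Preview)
Your proposal is correct and follows essentially the same approach as the paper: both set up the incidence correspondence $\Gamma$ (the paper writes it via the saturated ideal $\mf p_1 = [\mf p,\sum y_j z_j]:(\bar y\bar z)^\infty$), invoke completeness of $V$ to get that $ld(V)=\pi_2(\Gamma)$ is $\Delta$-closed over $K$, and then argue irreducibility. The only stylistic difference is in the irreducibility step: the paper goes straight to the generic-point construction you mention at the end (choosing $\bar a$ generic in $V$, $(u_k)_{k\neq j}$ $\Delta$-independent over $K\langle\bar a\rangle$, and solving for $u_j$), and verifies genericity algebraically via the differential division algorithm, whereas you lead with the $\m P^{n-1}$-bundle picture before noting the generic point; these are two presentations of the same content.
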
 

\begin{rem} With the correct hypotheses, Kolchin's proof of the special case essentially goes through here.  Similar remarks apply to the strategy of the next proposition and corollary, where Kolchin's original argument provides inspiration. For the proposition, the result seems to require a few new ingredients, mainly doing calculations in the generic fiber of the differential tangent bundle. 
\end{rem} 

\begin{proof} 
Let $\mf p \subseteq  K\{\bar z\}=K \{ z_0, z_1, \ldots ,z_n \}$ be the differential ideal corresponding to $V$. 
Now, let $\bar y = (y_0, y_1, \ldots y_n )$ and consider the differential ideal $\mf p_1 \in K\{\bar z , \bar y \}$ given by 
$$\left[ \mf p, \sum_{j=0}^n y_j z_j  \right]: (\bar y \bar z ) ^ \infty$$ 
which by definition is 
$$\left\{f\in K\{\bar z,\bar y\}: \, (y_iz_j)^ef\in \left[\mf p,\sum y_jz_j\right],\,  i,j \leq n,\text{ for some } e\in \m N \right\}.$$
As $\mf p_1$ is differentially bi-homogeneous, it determines a (multi-)projective differential algebraic variety $W \subseteq \m P^n \times \m P^n$. It is clear that the coordinate projection maps have the form
$$
\xymatrix{
&  & W \ar[ld]_{\pi_1} \ar[rd]^{\pi_2} &  & \\
 &   V &  & ld(V) &  
}
$$

Further, $ld(V)= \pi_2 (W)$, and, since $V$ is complete, $ld(V)$ is closed in the Kolchin topology of $\m P^n$ and defined over $K$. 

Next we prove that $ld(V)$ is irreducible over $K$. Let $\bar a=(a_0,\dots,a_n)$ be a representative of a generic point of $V$ over $K$ and fix $j$ such that $a_j \neq 0$. Pick elements $u_k \in \mc U$ for $0 \leq k \leq n$ and $k \neq j$ which are $\Delta$-algebraically independent over $K \langle \bar a \rangle $. Let 
$$u_j = - \sum _{k \neq j} u_k a_j ^{-1} a_k,$$ 
and $\bar u = ( u_0 , \ldots , u_n)$. One can see that $ (\bar a , \bar u)$ is a representative of a point in $W \subseteq \m P^n \times \m P^n$, so that $[u_0 : \cdots : u_n ] \in ld(V)$. 

We claim that $[u_0:\cdots:u_n]$ is a generic point of $ld(V)$ over $K$ (this will show that $ld(V)$ is irreducible over $K$). To show this it suffices to show that $(\bar a,\bar u)$ is a generic point of $\mf p_1$; i.e., it suffices to show that for every $p\in K\{\bar z,\bar y\}$ if $p(\bar a,\bar u)=0$ then $p\in \mf p_1$.

Let $ p \in K \{\bar z , \bar y \}$ be any differential polynomial. By the differential division algorithm there exists $p_0 \in K \{ \bar z, \bar y \}$ not involving $y_j$ such that 
$$z_j^e  p \equiv p_0 \quad  mod \left[\sum_{0 \leq i \leq n } y_i z_i \right]$$ 
for some $e \in \m N$. 
Thus, we can write $p_0$ as a finite sum $\sum p_M M$ where each $M$ is a differential monomial in $(y_k ) _{ 0 \leq k \leq n , k \neq j}$ and $p_M \in K \{ \bar z \}.$ 
Now, as $(u_k ) _{ 0 \leq k \leq n , k \neq j}$ are differential algebraic independent over $K \langle \bar a \rangle$, it follows that if $p ( \bar a , \bar u)=0$ then $p_M (\bar a) =0$ for all $M$ (and hence $p_M\in \mf p$, since $\bar a$ is a generic point of $\mf p$). This implies that, if $p(\bar a,\bar u)=0$, $p_0\in {\mf p} \cdot K\{\bar z,\bar y\}$ and so $p\in \mf p_1$, as desired.

\end{proof} 

For the following proposition we will make use of the following fact of Kolchin's about the Kolchin polynomial of the differential tangent space (we refer the reader to \cite{KolchinDAG} for the definition and basic properties of differential tangent spaces).

\begin{fact}\label{tang}
Let $V$ be an irreducible differential algebraic variety defined over $K$ with generic point $\bar v$. Then 
$$\omega_{T^{\Delta}_{\bar v}V}=\omega_V,$$
where $T^{\Delta}_{\bar v}V$ denotes the differential tangent space of $V$ at $\bar v$.
\end{fact}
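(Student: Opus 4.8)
The plan is to reduce everything to the combinatorics of characteristic sets, exploiting the fact that the Kolchin polynomial of an irreducible $\Delta$-variety is determined solely by the leaders of a characteristic set of its defining prime $\Delta$-ideal. The motivating algebraic picture is that at a generic (hence smooth) point of an irreducible variety the Zariski tangent space has dimension $\dim V$, because tangent vectors correspond to $K$-derivations of the function field and $\dim_{K(\bar v)}\Omega_{K(\bar v)/K}=\operatorname{trdeg}_K K(\bar v)$. The differential analogue should say that linearizing the defining equations at a generic point preserves not merely one numerical invariant but the entire Kolchin polynomial, and the mechanism making this work is that linearization preserves leaders.

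Concretely, let $I=I(V/K)\subseteq K\{y_1,\dots,y_n\}$ be the prime $\Delta$-ideal of $V$, fix an orderly ranking, and let $\Lambda=\{A_1,\dots,A_r\}$ be a characteristic set of $I$, with leaders $u_k=\theta_k y_{i_k}$. By Kolchin's theory of dimension polynomials, $\omega_V$ is computed from the leaders $\{u_1,\dots,u_r\}$ alone, namely as the number of derivatives $\theta y_i$ of order $\le t$ that are not derivatives of any leader $u_k$. The differential tangent space $T^{\Delta}_{\bar v}V$ is the linear $\Delta$-variety over $L:=K\langle\bar v\rangle$ cut out, in new $\Delta$-indeterminates $\bar w=(w_1,\dots,w_n)$, by the linearizations
$$L_f(\bar w)=\sum_{\theta,i}\frac{\partial f}{\partial(\theta y_i)}(\bar v)\,\theta w_i,\qquad f\in I.$$
First I would compute the leader of each $L_{A_k}$. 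Since no derivative of rank exceeding $u_k$ occurs in $A_k$, the partial $\partial A_k/\partial(\theta y_i)$ vanishes for $\theta y_i$ of rank above $u_k$, so the top term of $L_{A_k}$ is $S_{A_k}(\bar v)\,\theta_k w_{i_k}$, where $S_{A_k}$ is the separant. Because $\bar v$ is generic and separants of a characteristic set are not in $I$, the coefficient $S_{A_k}(\bar v)$ is nonzero; hence $L_{A_k}$ is a linear $\Delta$-polynomial in $\bar w$ with leader exactly $\theta_k w_{i_k}$ and invertible initial. Thus $\{L_{A_1},\dots,L_{A_r}\}$ carries precisely the leaders of $\Lambda$, with $y$ replaced by $w$.

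The crux, and the step I expect to be the main obstacle, is verifying that $\{L_{A_1},\dots,L_{A_r}\}$ is genuinely a characteristic set of the defining prime $\Delta$-ideal $J=I(T^{\Delta}_{\bar v}V/L)$, and not merely a generating family with the right leaders: one must check that autoreducedness and coherence of $\Lambda$ transfer to the linearizations (which they should, since differentiation commutes with linearization, so the reductions of $\theta L_{A_k}$ track those of $\theta A_k$), and that no element of $J$ of lower rank is reduced with respect to the $L_{A_k}$. Here I would lean on the fact that $J$ is a \emph{linear} differential ideal: for linear systems the defining ideal is governed by a coherent autoreduced set of linear forms with nonzero leading coefficients, and its Kolchin polynomial equals the leader-count exactly, so the characteristic-set dimension theory applies cleanly and identifies the leaders of $J$ with $\{\theta_k w_{i_k}\}$. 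Granting this, the Kolchin polynomial of $T^{\Delta}_{\bar v}V$ over $L$ is computed from the leaders $\{\theta_k w_{i_k}\}$, which are the leaders $\{u_k\}$ up to renaming the indeterminates, whence $\omega_{T^{\Delta}_{\bar v}V}=\omega_{\bar w/L}=\omega_{\bar v/K}=\omega_V$.
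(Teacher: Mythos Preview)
The paper does not supply its own proof of this statement: it is recorded as a \emph{Fact} attributed to Kolchin, with a pointer to \cite{KolchinDAG} for the definition and basic properties of differential tangent spaces. So there is no in-paper argument to compare against; your proposal is being measured against Kolchin's original treatment, and your outline is essentially that argument --- the Kolchin polynomial depends only on the family of leaders of a characteristic set, and linearizing at a generic point preserves that family because separants of a characteristic set lie outside the prime and hence do not vanish at $\bar v$.

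One refinement at the step you correctly flagged as the crux. The raw family $\{L_{A_1},\dots,L_{A_r}\}$ need not be autoreduced as written: autoreducedness of $\Lambda$ allows $A_j$ to involve a lower leader $u_k$ to positive degree (merely below $\deg_{u_k}A_k$), so $\partial A_j/\partial u_k(\bar v)$ may be nonzero and $L_{A_j}$ then contains $\theta_k w_{i_k}$ with the same degree~$1$ as $L_{A_k}$. This is harmless precisely because the initials $S_{A_k}(\bar v)$ are units in $L=K\langle\bar v\rangle$: one pass of Gaussian elimination over $L$, subtracting $L$-multiples of lower-rank $L_{A_k}$'s, clears these occurrences without disturbing any leader. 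The resulting linear set is autoreduced, coherent (linearization commutes with the derivations, so the coherence reductions for $\Lambda$ transport verbatim), and its saturation is the prime linear $\Delta$-ideal $J$ defining $T^{\Delta}_{\bar v}V$. From there your leader-count argument gives $\omega_{T^{\Delta}_{\bar v}V}=\omega_V$ exactly as you wrote.
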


In the case when the complete differential algebraic variety $V$ has $\Delta$-type zero, we have the following result on the Kolchin polynomial of $ld(V)$.

\begin{prop}\label{prok}
Let $V\subset \m P^n$ be a complete differential variety defined and irreducible over $K$. If $V$ has constant Kolchin polynomial equal to $d$, then the Kolchin polynomial of $ld(V)$ is given by
$$\omega _{ld(V)} (t) = (n-1) \binom{m+t}{m} + d.$$
\end{prop}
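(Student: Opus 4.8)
The plan is to reduce the computation of $\omega_{ld(V)}$ to a Kolchin-polynomial computation inside the differential tangent space of $V$, where everything becomes linear, and to convert between projective objects and their affine cones via (\ref{use}). First I set up generic points exactly as in the proof of Theorem \ref{linear}. Let $\widetilde V\subseteq\m A^{n+1}$ be the affine cone of $V$, so that by (\ref{use}) and the hypothesis $\omega_V=d$ we have $\omega_{\widetilde V}(t)=d+\binom{m+t}{m}$. I choose $\bar a$ to be a generic point of $\widetilde V$ over $K$ (a generic projective point scaled by a free $\Delta$-transcendental), fix $j$ with $a_j\neq 0$, take $(u_k)_{k\neq j}$ to be $\Delta$-algebraically independent over $K\langle\bar a\rangle$, and set $u_j=-a_j^{-1}\sum_{k\neq j}u_ka_k$. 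Then $\bar u=(u_0,\ldots,u_n)$ is a generic point over $K$ of the cone $C$ of $ld(V)$, and $\sum_i u_i a_i=0$.

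The backbone is the additivity of the Kolchin polynomial in a tower of differential fields,
$$\omega_{(\bar a,\bar u)/K}=\omega_{\bar a/K}+\omega_{\bar u/K\langle\bar a\rangle}=\omega_{\bar u/K}+\omega_{\bar a/K\langle\bar u\rangle}.$$
The first decomposition is immediate: $\omega_{\bar a/K}(t)=d+\binom{m+t}{m}$, while the $n$ elements $(u_k)_{k\neq j}$ are free over $K\langle\bar a\rangle$ and $u_j\in K\langle\bar a\rangle\langle(u_k)_{k\neq j}\rangle$, so $\omega_{\bar u/K\langle\bar a\rangle}(t)=n\binom{m+t}{m}$. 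Comparing the two decompositions, using $\omega_{\bar u/K}=\omega_C$ and $\omega_C=\omega_{ld(V)}+\binom{m+t}{m}$, reduces the entire proposition to the single identity
$$\omega_{\bar a/K\langle\bar u\rangle}(t)=\binom{m+t}{m}.$$

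To prove this, set $K'=K\langle\bar u\rangle$ and observe that $\bar a$ is a generic point over $K'$ of the component through $\bar a$ of the fiber $F=\widetilde V\cap\{\sum_i u_i z_i=0\}$. By Fact \ref{tang}, $\omega_{\bar a/K'}=\omega_{T^\Delta_{\bar a}F}$, and since the hyperplane's defining equation is already linear, $T^\Delta_{\bar a}F=T^\Delta_{\bar a}\widetilde V\cap\{\sum_i u_i x_i=0\}$. Applying Fact \ref{tang} once more, $S:=T^\Delta_{\bar a}\widetilde V$ is a \emph{linear} differential variety with $\omega_S(t)=d+\binom{m+t}{m}$. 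Thus the proposition comes down to showing that cutting $S$ by the single generic linear form $L=\sum_i u_i x_i$ drops the Kolchin polynomial by exactly the constant $d$, leaving $\omega_{S\cap\ker L}(t)=\binom{m+t}{m}$.

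This last step is the main obstacle, and it is where the linear structure (and the passage to the tangent space, as anticipated in the remark) is essential. The point is that $S$ has differential dimension $1$ (the leading coefficient of $\omega_S$ is $1$), so a generic point of $S$ has a single $\Delta$-transcendental coordinate with the remaining coordinates $\Delta$-algebraic over it, and the constant $d$ records a finite-dimensional part concentrated at order zero. Because the coefficients $u_i$ are $\Delta$-transcendental over $K\langle\bar a\rangle$, the form $L$ together with its prolongations $\delta^\mu L$ should meet this finite part transversally, imposing exactly $d$ independent conditions in total rather than the $\binom{m+t}{m}$ one might naively expect at order $\le t$—on $S$ all but finitely many of the $\delta^\mu L$ are already dependent. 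Making this transversality count precise, by identifying the module structure of $S$ and verifying that a generic order-zero functional annihilates precisely the constant part while preserving the free $\binom{m+t}{m}$ part, is the technical heart of the argument; once it is in hand, back-substitution through the additivity identities yields $\omega_{ld(V)}(t)=(n-1)\binom{m+t}{m}+d$.
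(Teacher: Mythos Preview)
Your reduction is attractive, but the argument has two genuine gaps.

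\textbf{The additivity ``backbone'' is not a theorem.} Kolchin polynomials are attached to a \emph{presentation} (a choice of generators), not to the underlying field extension, and the identity $\omega_{(\bar a,\bar u)/K}=\omega_{\bar a/K}+\omega_{\bar u/K\langle\bar a\rangle}$ fails in general. For a single derivation take $b=\delta a$ with $a$ $\delta$-transcendental: then $\omega_{(a,b)/K}(t)=t+2$ while $\omega_{a/K}(t)+\omega_{b/K\langle a\rangle}(t)=(t+1)+0=t+1$. In your first decomposition things happen to work because $\Theta(t)u_j\subseteq K(\Theta(t)\bar a,\Theta(t)\bar u^{*})$, but the second decomposition $\omega_{(\bar a,\bar u)/K}=\omega_{\bar u/K}+\omega_{\bar a/K\langle\bar u\rangle}$ is exactly the direction that needs justification, and you have not given one.

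\textbf{The ``main obstacle'' is not proved.} You reduce to showing that intersecting $S=T^\Delta_{\bar a}\widetilde V$ with the generic linear form $L=\sum u_ix_i$ drops the Kolchin polynomial by precisely $d$, and then you offer only a heuristic about transversality. This is the entire content of the proposition; everything else is bookkeeping. Along the way you also assert $T^\Delta_{\bar a}F=S\cap\ker L$, which requires that $\bar a$ is nonsingular on $F$ in the differential sense---this too is unaddressed.

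The paper bypasses both issues by computing $\omega_{\bar u/K}$ directly rather than via a tower. It dehomogenizes (sets $a_j=1$, works with $\bar a^{*}$), passes to the tangent space of the $K$-locus $W_1$ of the pair $(\bar a^{*},\bar u)$, and takes a generic $(\bar\alpha,\bar\eta)\in T^\Delta_{(\bar a^{*},\bar u)}W_1$. The linearized relation $-\sum_{i\neq j}a_i\eta_i-\eta_j=\sum_{i\neq j}u_i\alpha_i$, differentiated by all $\theta\in\Theta(d_1)$ for $d_1$ large, produces a linear system whose coefficient matrix in the $(\theta'\alpha_i)_{\theta'\in\Theta(d)}$ has entries in $K\langle\bar u^{*}\rangle$; the $\Delta$-algebraic independence of $\bar u^{*}$ forces enough rows to be independent that the matrix can be inverted. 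This expresses every $\theta'\alpha_i$ in $K\langle\bar a^{*},\bar u\rangle((\theta\bar\eta)_{\theta\in\Theta(d_1)})$, yielding $\omega_{\bar\eta/K\langle\bar a^{*},\bar u\rangle}(t)=n\binom{m+t}{m}+d$ exactly. A single application of Fact~\ref{tang} then gives $\omega_{\bar u/K}$, and (\ref{use}) finishes. That matrix argument \emph{is} the precise version of the transversality count you gestured at; without it, your proposal is a plausible outline but not a proof.
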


\begin{rem} In the ordinary case, the hypotheses on the Kolchin polynomial do not constitute any assumption on $V$. In that case, every such complete $V$ has differential type zero. In the partial case, the situation is much less clear. It is not known whether every complete differential algebraic variety has constant Kolchin polynomial (see \cite{DeltaCompleteness} for more details).
\end{rem} 

\begin{proof}
Let $W \subseteq \m P^{n} \times \m P^{n}$, $\bar a$ and $\bar u$ be as in the proof of Theorem~\ref{linear}. Fix $j$ such that $a_j\neq 0$ and, moreover, assume that $a_j =1$. Now, write $\bar a^*$ and $\bar u^*$ for the tuples obtained from $\bar a$ and $\bar u$, respectively, where we omit the $j$-th coordinate. Let $W_1 \subseteq \m A^{n} \times \m A^{n+1}$ be the differential algebraic variety with generic point $(\bar a^* , \bar u )$ over $K$. Consider $T_{( \bar a^* , \bar u )} ^\Delta W_1$, the differential tangent space of $W_1$ at $(\bar a^*,\bar u)$. Let $(\bar \alpha , \bar \eta )$ be generic of $T^{\Delta}_{(\bar a^*,\bar u)}W_1$  over $K\langle \bar a^*, \bar u \rangle$. 

From the equation $y_j = - \sum _{k \neq j} y_k z_k$ satisfied by $(\bar a^*, \bar u)$, we see that
$$-\sum_{ i \neq j } a_i \eta _i  - \eta_j  = \sum _ {i \neq j} u_i \alpha _i .$$

Choose $d_1$ so that $|\Theta (d_1)|$ is larger than $n \cdot |\Theta (d) |$, where $\Theta(d_1)$ denotes the set of derivative operators of order at most $d_1$ (similarly for $\Theta(d)$). 

For $\theta \in \Theta (d_1)$, we have that
$$\theta \left(-\sum_{ i \neq j } a_i \eta _i  - \eta_j  \right) = \theta \left( \sum _ {i \neq j} u_i \alpha _i \right) .$$
In the expression on the right $\theta \left( \sum _ {i \neq j} u_i \alpha _i \right)$, consider the coefficients of $\theta \alpha_i$ in terms of $\theta \bar u^*$, and denote these coefficients by $f (  i ,\theta' , \theta  ) \in K \langle \bar u ^ * \rangle$; i.e., $f (  i ,\theta' , \theta  )$ is the coefficient of $\theta'  \alpha_{i}$ in the equation 
$$\theta \left( -\sum_{ i \neq j } a_i \eta _i  - \eta_j  \right) = \theta \left( \sum _ {i \neq j} u_i \alpha _i \right) .$$
We can thus express this equation in the form
$$\left( \begin{array}{c} \theta \left(-\sum_{ i \neq j } a_i \eta _i  - \eta_j   \right)  \end{array} \right) =  F_\theta  A$$ 
where $F_\theta$ is the row vector with entries $$(f(i , \theta', \theta))_{ i \neq j,  \theta' \in \Theta(d)}$$ and $A$ is the column vector with entries 
$$\left( \begin{array}{c} \theta' \alpha_i \end{array} \right)_{i \neq j, \theta' \in \Theta (d) }. $$
Note that we only have the vectors $F_\theta$ and $A$ run through $\theta' \in \Theta (d)$ (instead of all of $\Theta(d_1)$). This is because any derivative of $\alpha_i$, $i\neq j$, of order higher than $d$ can be expressed as a linear combination of the elements of the vector $A$. This latter observation follows from the fact that $\omega_{\alpha/K\langle \bar a^*, \bar u\rangle}=d$, which in turn follows from the facts that $\alpha$ is a generic point of $T^\Delta_{\bar a^*}V$, our assumptioin on $\omega_V$, and Fact~\ref{tang}. 

By the choice of $d_1$ and $\bar u^*$ (recall that $\bar u^*$ consists of independent differential transcendentals over $K\langle \bar a \rangle$), there are $n \cdot |\Theta (d) |$ linearly independent row vectors $F_ \theta$. So, we can see (by inverting the nonsingular matrix which consists of $n \cdot |\Theta (d) |$ such $F_ \theta$'s as the rows) that all the elements of the vector $A$ belong to $K\langle \bar a ^ *, \bar u \rangle (( \theta \bar \eta )_{\theta\in \Theta(d_1)})$. Thus 
$$K\langle \bar a^*, \bar u\rangle((\theta \bar \eta)_{\theta\in \Theta(d_1)})=K\langle \bar a^*,\bar u, \bar\alpha\rangle ((\theta\eta_i)_{i\neq j, \theta\in \Theta(d_1)}).$$
Noting that $\bar \eta^*=(\eta_i)_{i\neq j}$ is a tuple of independent transcendentals over $K\langle \bar a^*, \bar u\rangle$ and that $\bar \eta^* $ is ($\Delta$-)independent from $\bar \alpha$ over $K \langle \bar a ^* , \bar u\rangle$, the above equality means that for all large enough values of $t$, 
$$\omega _ { \bar \eta / K \langle \bar a^* , \bar u \rangle } (t) = n \binom{m+t}{m} + d.$$
  Finally, since $\bar \eta$ is a generic of the differential tangent space at $\bar u$ of the $K$-locus of $\bar u$, Fact~\ref{tang} implies that 
  \begin{equation}\label{kopo}
  \omega_{\bar u/K}=n \binom{m+t}{m} + d,
  \end{equation}
  and thus by equation (\ref{use}) (in Section~\ref{dimpol}), we get
  $$\omega_{ld(V)}=(n-1) \binom{m+t}{m} + d,$$
  as desired.
\end{proof}

\begin{corr}
 Let $V\subset \m P^n$ be a complete differential variety defined and irreducible over $K$, and suppose we are in the ordinary case (i.e., $|\Delta|=1$). If the Kolchin polynomial of $V$ equals $d$, then there exists a unique (up to a nonzero factor in $K$) irreducible $R\in K\{\bar y\}$ of order $d$ such that an element in $\m A^{n+1}$ is linearly independent over $V$ if and only if it is in the general solution of the differential equation $R(\bar y)=0$.
\end{corr}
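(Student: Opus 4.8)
The plan is to transport the problem into affine space and then apply the classical structure theory of the general solution of a single differential polynomial, which is available precisely because $|\Delta|=1$. First I would replace $ld(V)\subseteq \m P^n$ by its affine cone. Let $L\subseteq \m A^{n+1}$ be the set of tuples that are linearly dependent over $V$; by Theorem~\ref{linear} and completeness of $V$ this set is $\Delta$-closed, irreducible over $K$, and equal to the affine cone over $ld(V)$. Taking $\bar a$ and $\bar u$ as in the proof of Proposition~\ref{prok}, the relation $\sum_j u_j a_j=0$ exhibits $\bar u$ as linearly dependent over $V$, and equation~(\ref{kopo}) gives $\omega_{\bar u/K}(t)=n\binom{m+t}{m}+d$; by (\ref{use}) this equals $\omega_{ld(V)}(t)+\binom{m+t}{m}$, so $\bar u$ is a generic point of $L$. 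Specializing to the ordinary case $m=1$ yields $\omega_L(t)=n(t+1)+d$, whose leading term shows that $L$ is an irreducible $\Delta$-variety of differential dimension $n$, i.e. of codimension one in $\m A^{n+1}$.

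Next I would apply Ritt's theorem on codimension-one differential varieties in the ordinary case: the prime $\Delta$-ideal $I(L)\subseteq K\{\bar y\}$ contains an irreducible differential polynomial $R$ of lowest rank, unique up to a nonzero factor in $K$, with $I(L)=[R]:S_R^\infty$, so that $L$ is exactly the general solution (general component) of $R=0$. This both produces $R$ and supplies the required uniqueness. To see that $R$ has order $d$, fix an orderly ranking and let $\delta^e y_{i_0}$ be the leader of $R$; the generic point of $[R]:S_R^\infty$ has the remaining $n$ coordinates $y_i$ (for $i\neq i_0$) as independent $\Delta$-transcendentals while $y_{i_0}$ is constrained only through $R=0$, so counting independent derivatives of order at most $t$ gives Kolchin polynomial $n(t+1)+e$. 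Comparing the constant terms with $\omega_L(t)=n(t+1)+d$ forces $e=d$, hence $R$ has order $d$, in agreement with Ritt's assertion in Example~\ref{Mainexam} that the order should equal $\dim V$.

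Finally, since the set of tuples of $\m A^{n+1}$ that are linearly dependent over $V$ is exactly $L$, and $L$ is precisely the general solution of $R=0$, a tuple lies in the general solution of $R=0$ if and only if it is linearly dependent over $V$ — exactly Ritt's formulation from Example~\ref{Mainexam}, and, in the constant case $V=\m P^n(\m C)$, the classical vanishing-Wronskian criterion. The one genuinely nontrivial input, and the sole place the hypothesis $|\Delta|=1$ enters, is the codimension-one structure theorem invoked above: it is special to the ordinary case that a codimension-one prime $\Delta$-ideal is the general component of a single irreducible polynomial unique up to a $K$-factor, and citing (or reproving) this cleanly — together with checking that $L$ equals the full general component and not merely its generic locus — is where the main obstacle lies. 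Everything else is Kolchin-polynomial bookkeeping already carried out in Proposition~\ref{prok}.
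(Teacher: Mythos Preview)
Your proposal is correct and follows essentially the same route as the paper: pass to the affine cone $L$ of $ld(V)$ (equivalently, to the homogeneous $\Delta$-ideal $\mf p$), read off $\omega_L(t)=n(t+1)+d$ from equation~(\ref{kopo}), and then invoke the ordinary-case structure theorem that a prime $\Delta$-ideal of differential codimension one is the general component of a single irreducible $R$, unique up to a factor in $K$. The only cosmetic differences are that the paper cites Kolchin's book directly --- \cite[Chapter~IV, \S7, Proposition~4]{KolchinDAAG} for existence together with the order, and \cite[Chapter~IV, \S6, Theorem~3(b)]{KolchinDAAG} for uniqueness --- whereas you recover the order $e=d$ by a direct Kolchin-polynomial count and fold uniqueness into the lowest-rank characterization of $R$.
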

\begin{proof}
Let $\mf p$ be the differential (homogeneous) ideal of $ld(V)$ over $K$. Then, by equation (\ref{kopo}) in the proof of Proposition~\ref{prok}, we get
$$\omega_{\mf p}=n(t+1)+d=(n+1)\binom{t+1}{1}-\binom{t-d+1}{1}.$$
Thus, by \cite[Chapter IV, \S 7, Proposition 4]{KolchinDAAG}, there exists an irreducible $R\in K\{\bar y\}$ of order $d$ such that $\mf p$ is precisely the general component of $R$; in other words, an element in $\m A^{n+1}$ is linearly independent over $V$ if and only if it is in the general solution of the differential equation $R=0$. For uniqueness, let $R'$ be another differential polynomial over $K$ of order $d$ having the same general component as $R$. Then, by \cite[Chapter IV, \S 6, Theorem 3(b)]{KolchinDAAG}, $R'$ is in the general component of $R$ and so $ord(R')\geq ord(R)$. By symmetry, $R$ is in the general component of $R'$, and so we get that $ord(R)=ord(R')$. Thus $R$ and $R'$ divide each other, as desired.
\end{proof}

We finish this section by discussing how the assertion of Example~\ref{Mainexam} follows from the results of this section. Let $k$ be a differential subfield of $K$.

\begin{defn}
Let $V$ be a differential algebraic variety defined over $k$. We say $V$ is $k$-large with respect to $K$ if $V(k)=V(\bar{K})$ for some (equivalently for every) $\bar{K}$ differential closure of $K$.
\end{defn}

One can characterize the notion of largeness in terms of differential subvarieties of $V$ as follows:

\begin{lem}
$V$ is $k$-large with respect to $K$ if and only if for each differential subvariety $W$ of $V$ defined over $K$, $W(k)$ is $\Delta$-dense in $W$.
\end{lem}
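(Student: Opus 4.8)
The plan is to prove the biconditional by unwinding the definition of $k$-largeness in terms of points and translating it into a statement about density of $\Delta$-closed subsets. Recall that $V$ is $k$-large with respect to $K$ means $V(k) = V(\bar K)$ for a differential closure $\bar K$ of $K$. The key observation is that $V(\bar K)$ contains representatives of the generic points (over $K$) of every $K$-irreducible subvariety, so equality $V(k)=V(\bar K)$ forces the $k$-points to be plentiful enough to meet, and in fact be dense in, every $K$-subvariety. I would set up both directions around this dictionary between ``all $\bar K$-points already lie in $k$'' and ``$k$-points are $\Delta$-dense in each $K$-subvariety.''

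For the forward direction, suppose $V$ is $k$-large, and let $W \subseteq V$ be a $\Delta$-closed subvariety defined over $K$. I want to show $W(k)$ is $\Delta$-dense in $W$, i.e.\ the $\Delta$-closure of $W(k)$ over $K$ equals $W$. Since $W$ is defined over $K$ and $\bar K$ is a differential closure of $K$, the $\bar K$-points $W(\bar K)$ are $\Delta$-dense in $W$ (this is the standard fact that points in a differentially closed extension are dense in a $K$-variety; more precisely a generic point of each $K$-irreducible component has a representative in $\bar K$). But by $k$-largeness $W(\bar K) = W(\bar K)\cap V(\bar K) = W(\bar K)\cap V(k) \subseteq V(k)$, and since these points lie in $W$ they lie in $W(k)$. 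Hence $W(k) \supseteq W(\bar K)$, and as $W(\bar K)$ is already dense in $W$, so is $W(k)$.

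For the converse, assume that $W(k)$ is $\Delta$-dense in $W$ for every $K$-subvariety $W$ of $V$; I must deduce $V(\bar K) \subseteq V(k)$ (the reverse inclusion $V(k)\subseteq V(\bar K)$ being automatic since $k \subseteq \bar K$). Take any point $p \in V(\bar K)$ and let $W$ be the $\Delta$-closure over $K$ of $\{p\}$, an irreducible $K$-subvariety of $V$ with generic point $p$. The density hypothesis gives that $W(k)$ is $\Delta$-dense in $W$; since $p$ is generic for $W$ over $K$, any point whose $K$-closure is all of $W$ realizes the generic type of $W$ over $K$. The crucial step is to argue that the presence of $k$-rational points dense in $W$ forces the generic type of $W$ over $K$ to already be realized in $\bar K$ in a way compatible with $k$, so that $p$ (which is $\bar K$-rational) in fact coincides with a $k$-point; this is where I expect to lean on the defining property of the differential closure as the prime model over $K$, so that the isomorphism type over $K$ determines realizations inside $\bar K$.

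The main obstacle, and the part requiring the most care, is precisely this last step of the converse: density of $W(k)$ is an a priori weaker statement than $W$ having a $k$-rational generic point, so I cannot simply say ``the generic point is in $k$.'' The right way to bridge the gap is to use that $\bar K$ is a \emph{minimal} differentially closed field containing $K$ (the differential closure), together with homogeneity/$\omega$-saturation of $\bar K$ over $K$: density of $k$-points in every $K$-subvariety means there is no $\Delta$-closed condition over $K$ separating a $\bar K$-point from the $k$-points, and by quantifier elimination for $DCF_{0,m}$ and the prime-model property this forces every $\bar K$-point of $V$ into $V(k)$. I would phrase this contrapositively: if some $p \in V(\bar K)\setminus V(k)$ existed, its $K$-locus $W$ would be a $K$-subvariety in which $W(k)$ fails to be dense (the point $p$ witnessing a nonempty $\Delta$-open subset of $W$ avoided by $W(k)$), contradicting the hypothesis.
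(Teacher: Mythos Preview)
Your forward direction is correct and identical to the paper's. For the converse you have the right strategy and the right tool (the prime-model property of $\bar K$ over $K$), and this is exactly the paper's approach; what remains vague in your write-up is how the open set in your contrapositive is produced and why a $k$-point landing in it forces $p\in k$. The paper makes this explicit: since $\bar K$ is the prime model over $K$, $tp(p/K)$ is isolated, so there is $f\in K\{\bar x\}$ such that every point of $W$ with $f\neq 0$ is generic for $W$ over $K$; density of $W(k)$ in $W$ then gives a $k$-point $b$ with $f(b)\neq 0$, hence $b$ is generic for $W$ over $K$, but $b\in k\subseteq K$ means the $K$-locus of $b$ is $\{b\}$, so $W=\{b\}$ and $p=b\in k$. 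Once you insert this isolation argument, your contrapositive works verbatim: the isolating open set $\{f\neq 0\}\cap W$ is the ``nonempty $\Delta$-open subset of $W$ avoided by $W(k)$'' whenever $p\notin k$.
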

\begin{proof}
Suppose $W(k)$ is $\Delta$-dense in $W$, for each differential algebraic subvariety $W$ of $V$ defined over $K$. Let $\bar{a}$ be a $\bar{K}$-point of $V$. Since $tp(\bar a/K)$ is isolated, there is a differential polynomial $f\in K\{\bar{x}\}$ such that every differential specialization $\bar b$ of $\bar a$ over $K$ satisfying $f(\bar b)\neq 0$ is a generic differential specialization. Let $W\subseteq V$ be the differential locus of $\bar a$ over $K$. By our assumption, there is a $k$-point $\bar b$ of $W$ such that $f(\bar b)\neq 0$. Hence, $\bar b$ is a generic differential specialization of $\bar a$ over $K$, and so $\bar a$ is a $k$-point.

The converse is clear since for every differential algebraic variety $W$, defined over $\bar K$, $W(\bar K)$ is $\Delta$-dense in $W$.
\end{proof}

\begin{rem}\label{remla}
Let $V$ be an (infinite) algebraic variety in the constants defined over $k$, and $K$ a differential field extension of $k$ such that $K^\Delta=k^\Delta$. Here $K^{\Delta}$ and $k^{\Delta}$ denote the constants of $K$ and $k$, respectively.
\begin{enumerate}
\item $V$ is $k$-large with respect to $K$ if and only if $k^{\Delta}$ is algebraically closed. Indeed, if $V$ is $k$-large, the image  of $V(k)$ under any of the Zariski-dominant coordinate projections of $V$ is dense in $\bar K^\Delta$. Hence, $k^\Delta=\bar K^\Delta$, implying $k^\Delta$ is algebraically closed. Conversely, if $k^\Delta$ is algebraically closed, then $k^\Delta=\bar K^\Delta$ (since $k^\Delta=K^\Delta$). Hence, $V(k^\Delta)=V(\bar K^\Delta)$, but since $V$ is in the constants we get $V(k)=V(\bar K)$.
\item In the case $k=\m C$ and $K$ is the field of meromorphic in some region of $\m C$. We have that $K^\Delta=\m C$ is algebraically closed, and so $V$ is $k$-large with respect to $K$. Hence, in Example \ref{Mainexam} the largeness condition of $V$ is given implicitly.
\end{enumerate}
\end{rem}

\begin{lem}\label{large}
Let $V\subset \m P^n$ be a complete differential algebraic variety defined over $k$, and suppose $V$ is $k$-large with respect to $K$. Then the $K$-points of $\m P^n$ that are linearly dependent over $V$ are precisely those that are linearly dependent over $V(k)$.
\end{lem}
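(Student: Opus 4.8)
The plan is to prove the two inclusions separately, with essentially all of the content in one of them. The easy direction needs no hypothesis beyond the inclusion $V(k)\subseteq V$: if a representative $\bar a$ of a $K$-point $\alpha$ satisfies $\sum_{j} a_j v_j = 0$ for some $v\in V(k)$, then that same $v$, viewed as a point of $V$, witnesses linear dependence of $\alpha$ over $V$. So I would dispose of ``linearly dependent over $V(k)$ $\Rightarrow$ linearly dependent over $V$'' in one line.

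For the converse I would argue as follows. Suppose $\alpha\in\m P^n(K)$ is linearly dependent over $V$, and fix a representative $\bar a=(a_0,\dots,a_n)\in K^{n+1}$ (a $K$-point of $\m P^n$ admits such a representative). By definition there is $v\in V$ with $\sum_j a_j v_j=0$. The key move is to read this as a non-emptiness statement for a hyperplane section. The linear form $\ell(\bar z)=\sum_{j=0}^n a_j z_j$ has coefficients in $K$ and is $\Delta$-homogeneous of degree one, so its zero set $H\subseteq \m P^n$ is $\Delta$-closed and defined over $K$; hence $W:=V\cap H$ is a $\Delta$-closed subvariety of $V$ defined over $K$ (using that $V$ is defined over $k\subseteq K$). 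The given $v$ shows $W\neq\emptyset$. Now I would invoke the characterization of largeness established above — that $V$ being $k$-large with respect to $K$ forces $W'(k)$ to be $\Delta$-dense in $W'$ for every $K$-subvariety $W'$ of $V$ — to conclude $W(k)\neq\emptyset$. Any $v'\in W(k)$ then lies in $V(k)$ and satisfies $\sum_j a_j v'_j=0$, so the same representative $\bar a$ witnesses that $\alpha$ is linearly dependent over $V(k)$.

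The only delicate point — and the main, if mild, obstacle — is the clean application of the density characterization, which is stated for subvarieties of $V$ defined over $K$. If one prefers to apply it only to irreducible subvarieties, I would replace $W=V\cap H$ by the differential locus $W_0$ of the witness $v$ over $K$: this is irreducible, defined over $K$, and contained in $W\subseteq V$ (since $V$ and $H$ are $K$-closed and contain $v$), and $\Delta$-density of $W_0(k)$ in the non-empty $W_0$ again produces a point of $V(k)\cap H$. I would also note that completeness of $V$ does not seem to be needed directly in this argument; it enters the broader development through Theorem~\ref{linear} (ensuring $ld(V)$ is $K$-closed), whereas the hypothesis doing the real work here is the largeness of $V$.
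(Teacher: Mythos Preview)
Your proof is correct. The paper's argument is marginally different in packaging: rather than invoking the density characterization of largeness, it uses the definition directly. Given $\alpha\in\m P^n(K)$ linearly dependent over $V$, the system ``$\bar z\in V$ and $\sum a_j z_j=0$'' is consistent and defined over $K$; since the differential closure $\bar K$ is existentially closed (equivalently, an elementary substructure of $\mc U$), there is a witness $v\in V(\bar K)$, and then $V(\bar K)=V(k)$ by the defining equality for $k$-largeness. Your route goes through the lemma characterizing largeness via $\Delta$-density of $k$-points in $K$-defined subvarieties; this is equivalent and arguably cleaner, since the existential-closure step is already absorbed into that lemma. Your remark that completeness plays no role in this particular lemma is accurate --- the paper's proof does not use it either; completeness is only needed earlier to ensure $ld(V)$ is closed.
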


\begin{proof}
Suppose $\alpha\in \m P^n(K)$ is linearly independent over $V$. Then, since the models of $DCF_{0,m}$ are existentially closed, we can find $v\in V(\bar K)$, where $\bar K$ is a differential closure of $K$, such that $\sum v_ia_i=0$ where $\bar a$ is a representative of $\alpha$. But, by our largeness assumption, $V(\bar K)=V(k)$, and thus $\alpha$ is linearly independent over $V(k)$. 
\end{proof}

Putting together Theorem \ref{linear}, Remark~\ref{remla}, and Lemma \ref{large}, we see that if $V$ is an irreducible algebraic variety in $\m P^n(\m C)$ and $K$ is a differential field extension of $\m C$ with no new constants then there is a projective differential algebraic variety defined over $\m C$, namely $ld(V)$, which only depends on $V$ such that for any tuple $f=(f_0, f_1,\ldots,f_n)$ from $K$, $f$ is linearly dependent over $V$ if and only if $f\in ld(V)$.

\subsection{Generalized Wronskians}\label{gene}
It is well-known that a finite collection of meromorphic functions (on some domain of $\m C$) is linearly dependent over $\m C$ if and only if its Wronskian vanishes. Roth \cite{Roth} generalized (and specialized) this fact to rational functions in several variables using a generalized notion of the Wronskian. This was later generalized to the analytic setting \cite{BerensteinChangLi,Wolsson}. We will see how generalizations of these results are easy consequences of the results in the previous section.

Let $\alpha = (\alpha_{1} , \ldots , \alpha_{m}) \in  \m N^m,$ and $| \alpha |= \sum \alpha_{i}$. Fix the (multi-index) notation $\delta^\alpha = \delta_1 ^ {\alpha_{1}} \ldots \delta_m ^{ \alpha _{m} }$, 
and $A = (\alpha^{(0)}, \alpha^{(1)} , \ldots , \alpha ^{(n)} ) \in (\m N^m)^{n+1}$. We call  $$\mc W_A:= \left| \left(  \begin{array}{cccc}
\delta ^ { \alpha^{(0)}} y_0 & \delta ^ { \alpha^{(0)}} y_1 & \ldots & \delta ^ { \alpha^{(0)}} y_n  \\
\delta ^ { \alpha^{(1)}} y_0 & \delta ^ { \alpha^{(1)}} y_1 & \ldots & \delta ^ { \alpha^{(1)}} y_n  \\
\vdots & \vdots  & \ddots & \vdots \\
\delta ^ { \alpha^{(n)}} y_0 & \delta ^ { \alpha^{(n)}} y_1 & \ldots & \delta ^ { \alpha^{(n)}} y_n  
 \end{array} \right)   \right|$$ the \emph{Wronskian associated to} $A$.

\begin{thm}\label{linwronski} If $V= \m P^n ( \mc U^\D)$, the constant points of $\m P^n$, then the projective differential variety $ld (V)$ is equal to the zero set in $\m P^n$ of the collection of generalized Wronskians (i.e., as the tuple $A$ varies in $(\m N^m)^{n+1}$). 
\end{thm}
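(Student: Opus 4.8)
The plan is to prove the set equality $ld(V)=Z$ directly, where $Z\subseteq \m P^n$ denotes the common zero set of all the generalized Wronskians $\mc W_A$. Since $V=\m P^n(\mc U^\Delta)$, by definition a point $[b_0:\cdots:b_n]$ lies in $ld(V)$ precisely when $b_0,\ldots,b_n$ are linearly dependent over the constants $\mc U^\Delta$. The organizing observation is that the vanishing of \emph{all} generalized Wronskians at $\bar b$ is exactly the statement that the infinite matrix $M(\bar b)=(\delta^\alpha b_j)_{\alpha\in\m N^m,\ 0\le j\le n}$ has rank at most $n$ over $\mc U$: each $\mc W_A(\bar b)$ is the $(n+1)\times(n+1)$ minor of $M(\bar b)$ formed from the rows $\alpha^{(0)},\ldots,\alpha^{(n)}$ together with all $n+1$ columns, and a matrix with $n+1$ columns has every such maximal minor vanishing if and only if its columns are $\mc U$-linearly dependent.

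First I would dispatch the inclusion $ld(V)\subseteq Z$. If $\sum_j c_j b_j=0$ with the $c_j\in\mc U^\Delta$ not all zero, then applying any $\delta^\alpha$ and pulling the constants through gives $\sum_j c_j\,\delta^\alpha b_j=0$ for every $\alpha$; hence the columns of $M(\bar b)$ are dependent and every $\mc W_A(\bar b)$ vanishes. This computation also shows that the condition cutting out $Z$ is invariant under rescaling $\bar b\mapsto\lambda\bar b$ (the same constants witness the dependence), so that $Z$ is genuinely well-defined as a subset of $\m P^n$ even though an individual $\mc W_A$ need not be $\Delta$-homogeneous.

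The substantive direction is $Z\subseteq ld(V)$: knowing only that $M(\bar b)$ has rank at most $n$, I must produce a dependence relation with \emph{constant} coefficients. By the rank bound there are $c_0,\ldots,c_n\in\mc U$, not all zero, with $\sum_j c_j\,\delta^\alpha b_j=0$ for all $\alpha\in\m N^m$. I would choose such a relation with the fewest nonzero coefficients and normalize some $c_{j_0}=1$. Applying $\delta_k$ to the relation indexed by $\alpha$, and then using the relation indexed by $\alpha+e_k$ to cancel the terms $c_j\,\delta^{\alpha+e_k}b_j$, yields $\sum_j(\delta_k c_j)\,\delta^\alpha b_j=0$ for all $\alpha$. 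Since $\delta_k c_{j_0}=0$, this new relation has strictly smaller support, so by minimality it must be trivial; as $k$ was arbitrary, every $c_j$ is a constant. The relation indexed by $\alpha=0$ then exhibits $\bar b$ as linearly dependent over $\mc U^\Delta$, i.e.\ $\bar b\in ld(V)$.

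The main obstacle is precisely this final descent from $\mc U$-coefficients to constant coefficients; everything else is formal linear algebra over $\mc U$. The minimal-support-plus-differentiation argument is the heart of the matter, and it is what replaces the analytic reasoning of \cite{Roth, Wolsson, BerensteinChangLi}. I would close with a consistency remark against the structural results of this section: taking $d=n$ (the constant Kolchin polynomial of $\m P^n(\mc U^\Delta)$) in Proposition \ref{prok} gives $\omega_{ld(V)}(t)=(n-1)\binom{m+t}{m}+n$, although this computation is not needed for the identification itself.
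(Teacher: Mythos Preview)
Your proof is correct, and it diverges from the paper's only at the key step of passing from a $\mc U$-linear relation among the columns of $M(\bar b)$ to a relation with constant coefficients. Both arguments start identically: the vanishing of every $\mc W_A(\bar b)$ says exactly that $M(\bar b)$ has column rank at most $n$, so some nontrivial $(c_0,\ldots,c_n)\in\mc U^{n+1}$ satisfies $\sum_j c_j\,\delta^\alpha b_j=0$ for all $\alpha$. You then run the classical minimal-support trick: take a shortest such relation, normalize $c_{j_0}=1$, differentiate and subtract the relation at $\alpha+e_k$ to obtain $\sum_j(\delta_k c_j)\,\delta^\alpha b_j=0$ with strictly smaller support, and conclude by minimality that every $c_j$ is constant. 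The paper instead notes (via the same differentiate-and-subtract computation) that the kernel $W:=\ker M(\bar b)\subseteq\mc U^{n+1}$ is stable under each $\delta_k$, hence is a finite-dimensional $\Delta$-module over $\mc U$, and then invokes the structural fact that over a differentially closed field such a module admits a basis of $\Delta$-constants (equivalently, an integrable linear system has a full fundamental system of solutions). Your route is more elementary and in fact works over an arbitrary $\Delta$-field, not just a differentially closed one; the paper's route is more conceptual and ties the result explicitly to the theory of linear differential equations.

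One small quibble: your remark that $Z$ is well defined on $\m P^n$ because ``the same constants witness the dependence'' really establishes scaling-invariance of $ld(V)$, not of $Z$ directly. The clean direct argument is that, by the Leibniz rule, each row $\delta^\alpha(\lambda b_j)$ of $M(\lambda\bar b)$ is a $\mc U$-linear combination of the rows $\delta^\beta b_j$ with $\beta\le\alpha$ of $M(\bar b)$, with the diagonal coefficient equal to $\lambda\neq 0$; hence the column rank is unchanged under rescaling.
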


\begin{rem}
This theorem of the Wronskian is well known, we refer the reader to \cite[Chapter II, \S 1, Theorem 1]{KolchinDAAG} for a standard proof. Since we are not yet restricting ourselves to a subcollection of generalized Wronskians, we present a more direct proof (using the language of this paper).
\end{rem}

\begin{proof} 
The vanishing of the collection of generalized Wronskians is clearly a necessary condition for linear dependence. We now show that it is sufficient. Let $(f_0,f_1,\ldots,f_n)$ be a tuple in the zero set of the collection of generalized Wronskians. Consider the matrix
$$M=(\delta^{\alpha}f_i)_{i\leq n, \alpha\in \m N^m}.$$
By our assumption, $M$ has rank at most $n$ and so $V:=\operatorname{ker}M\subseteq \mc U^{n+1}$ is a positive dimensional subspace. We now check that $V$ is stable under the derivations. Let $\delta\in \Delta$ and $v=(v_0,\ldots,v_n)\in V$, then for any $\alpha\in \m N^m$ we have
$$\sum_{i=0}^n \delta^\alpha f_i\cdot \delta v_i=\delta\left(\sum_i \delta^\alpha f_i\cdot v_i\right)-\sum_i\delta(\delta^\alpha f_i)\cdot v_i=0.$$
So $\delta v\in V$. Thus, $(V,\Delta)$ is a $\Delta$-module over $\mc U$. It is well known that a finite dimensional $\Delta$-module over a differentially closed field has a basis consisting of $\Delta$-constants (this can be deduced from the existence of fundamental matrix of solutions of integrable linear differential equations). Hence, there is $(c_0,\ldots,c_n)\in V\cap \mc U^{\Delta}$, and so, in particular, $\sum_i c_i f_i=0$.
\end{proof}

The following corollaries show why our results are essentially generalizations of \cite[Theorem 2.1]{BerensteinChangLi}, \cite[Theorem 2.1]{Walker} and \cite{Wolsson}: 

\begin{corr} Let $k$ be a differential subfield of $K$ and $f_0 , \ldots , f_n\in K$. If $\m P^n(\mc U^\D)$ is $k$-large with respect to $K$, then  $f_0, \ldots , f_n$ are linearly dependent over $\m P^n(k^\D)$ if and only if the collection of generalized Wronskians vanish on $(f_0,\dots,f_n)$. 
\end{corr}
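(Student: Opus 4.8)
The plan is to recognize this corollary as a direct specialization of Theorem~\ref{linwronski} combined with the largeness dictionary of Lemma~\ref{large}. Set $V := \m P^n(\mc U^\D)$. First I would record that $V$ meets the standing hypotheses of both results: it is defined over $\m Q \subseteq k$, it is irreducible (being projective space over the algebraically closed constant field $\mc U^\D$), and, by Kolchin's foundational result that the constant points of a projective algebraic variety are complete, it is a complete differential variety. Thus $ld(V)$ is a genuine projective differential variety and both Theorem~\ref{linwronski} and Lemma~\ref{large} apply to $V$ with the given $k$ and $K$.

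Next, I would chain together three equivalences. By Theorem~\ref{linwronski}, the collection of generalized Wronskians vanishes on $(f_0,\dots,f_n)$ if and only if $(f_0,\dots,f_n) \in ld(V)$, which by the definition of $ld(V)$ is the same as $(f_0,\dots,f_n)$ being linearly dependent over $V = \m P^n(\mc U^\D)$. Since each $f_i \in K$, the tuple represents a $K$-point of $\m P^n$, so the $k$-largeness hypothesis lets me invoke Lemma~\ref{large}: linear dependence of this $K$-point over $V$ is equivalent to its linear dependence over $V(k)$.

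It then remains to identify $V(k)$ with $\m P^n(k^\D)$. A point of $V(k)$ is simultaneously a constant point and $k$-rational; after normalizing a nonzero coordinate to $1$, its homogeneous ratios lie in $k$ and are constants, hence lie in $k \cap \mc U^\D = k^\D$, so the point belongs to $\m P^n(k^\D)$. The reverse inclusion is immediate. Substituting $V(k) = \m P^n(k^\D)$ into the last equivalence yields exactly the assertion that the generalized Wronskians vanish on $(f_0,\dots,f_n)$ if and only if $f_0,\dots,f_n$ are linearly dependent over $\m P^n(k^\D)$.

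I do not anticipate a serious obstacle, since the substantive work is already packaged in Theorem~\ref{linwronski} and Lemma~\ref{large}; the only points requiring care are verifying that $V = \m P^n(\mc U^\D)$ genuinely satisfies the completeness and irreducibility hypotheses, and the small computation identifying the $k$-rational constant points with $\m P^n(k^\D)$. The one direction of the equivalence that is not formal --- that linear dependence over all of $V$ forces linear dependence already over the $k$-points --- is precisely where the $k$-largeness assumption is indispensable, and it is supplied entirely by Lemma~\ref{large}.
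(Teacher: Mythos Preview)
Your proposal is correct and matches the paper's intended argument: the corollary is stated without proof precisely because it follows by combining Theorem~\ref{linwronski} (the Wronskian description of $ld(\m P^n(\mc U^\Delta))$) with Lemma~\ref{large} (largeness transfers linear dependence from $V$ to $V(k)$), together with the routine identification $V(k)=\m P^n(k^\Delta)$ that you spell out. The only superfluous step is your verification that $ld(V)$ is a projective differential variety via completeness and irreducibility; Theorem~\ref{linwronski} already exhibits $ld(V)$ explicitly as a zero set, so that check is not needed here.
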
 

Since $\m P^n(\mc U^\D)$ is $\m C$-large with respect to any field of meromorphic functions on some domain of $\m C^m$, we have

\begin{corr} The vanishing of the collection of generalized Wronskians is a necessary and sufficient condition for the linear dependence (over $\m C$) of a finite collection of meromorphic functions on some domain of $\m C^m$. 
\end{corr}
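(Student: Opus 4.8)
The plan is to realize this statement as a direct specialization of the preceding corollary to the concrete analytic situation. First I would fix the differential field $K$ to be the field of meromorphic functions on a fixed (connected) domain $D\subseteq\m C^m$, equipped with the $m$ commuting partial derivations $\partial/\partial z_1,\ldots,\partial/\partial z_m$, and set $k=\m C$. Since $\mc U$ is a universal domain (a sufficiently saturated model of $DCF_{0,m}$), this $K$ embeds into $\mc U$ as a $\Delta$-field, so all the machinery of the section becomes available in this setting.

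The essential point to check is that the largeness hypothesis of the preceding corollary holds, i.e.\ that $\m P^n(\mc U^\D)$ is $\m C$-large with respect to $K$. For this I would invoke Remark~\ref{remla}: a meromorphic function on a connected domain annihilated by every $\partial/\partial z_i$ is constant, so $K^\D=\m C=k^\D$, and since $\m C$ is algebraically closed, part (1) of Remark~\ref{remla} yields the desired largeness of $\m P^n(\mc U^\D)$. This is exactly the content recorded in part (2) of that remark, now extended from $\m C$ to $\m C^m$.

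With largeness in hand, the preceding corollary gives that $f_0,\ldots,f_n\in K$ are linearly dependent over $\m P^n(k^\D)=\m P^n(\m C)$ if and only if the collection of generalized Wronskians $\mc W_A$ vanishes on $(f_0,\ldots,f_n)$. The final step is to unwind the definition of linear dependence over the variety $\m P^n(\m C)$: by definition this means there is a point $[c_0:\cdots:c_n]\in\m P^n(\m C)$ with $\sum_i c_i f_i=0$, and a point of $\m P^n(\m C)$ is precisely a nonzero tuple of complex numbers up to scaling. Hence linear dependence over $\m P^n(\m C)$ coincides with classical $\m C$-linear dependence of $f_0,\ldots,f_n$, which completes the argument.

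I do not expect any serious obstacle, since the corollary is a specialization of the general theory developed above; the only genuine verifications are the computation of the constant subfield $K^\D=\m C$ (an identity-theorem fact from several complex variables, requiring $D$ connected) and the translation between linear dependence over the variety $\m P^n(\m C)$ and ordinary $\m C$-linear dependence. The mildest technical care is needed in confirming that the analytic field $K$ genuinely embeds into the abstract universal domain $\mc U$, but this is guaranteed by the saturation of $\mc U$.
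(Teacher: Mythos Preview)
Your proposal is correct and follows exactly the approach the paper takes: the paper simply notes, immediately before stating the corollary, that $\m P^n(\mc U^\D)$ is $\m C$-large with respect to any field of meromorphic functions on some domain of $\m C^m$, and then invokes the preceding corollary. You have supplied precisely this argument, with the added (and appropriate) care of noting that connectedness of the domain is needed for $K^\Delta=\m C$ and that Remark~\ref{remla}(2), stated for $\m C$, extends to $\m C^m$.
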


In \cite{Walker}, Walker proved that the vanishing of the collection of generalized Wronskians is equivalent to the vanishing of the subcollection of those Wronskians associated to Young-like sets (and this is the least subcollection of Wronskians with this property). A \emph{Young-like} set $A$ is an element of $(\m N^m)^{n+1}$ with the property that if $\alpha \in A$ and $\beta \in \m N^m$ are such that $\beta <\alpha$ in the product order of $\m N^m$, then $\beta \in A$. (When $m=2$, Young-like sets correspond to Young diagrams.)

There are computational advantages to working with Young-like sets, since the full set of generalized Wronskians grows much faster in $(m,n)$ (where $m$ is the number of derivations and $n$ is the number of functions). Even for small values of $(m,n)$ the difference is appreciable, see for example \cite{Walker} for specifics on the growth of the collection of Young-like sets. 

Using Walker's result, we obtain the following corollary:
\begin{corr}
If $V= \m P^n ( \mc U^\D)$, the constant points of $\m P^n$, then the projective differential variety $ld (V)$ is equal to the zero set in $\m P^n$ of the collection of generalized Wronskians associated to Young-like sets. Moreover, this subcollection of Wronskians is the smallest one with this property.
\end{corr}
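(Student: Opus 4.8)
The plan is to deduce this directly by combining Theorem~\ref{linwronski} with Walker's theorem, which was recalled in the discussion immediately preceding the statement. First I would invoke Theorem~\ref{linwronski} to identify the variety $ld(V)$, for $V=\m P^n(\mc U^\D)$, with the common zero set in $\m P^n$ of the full collection of generalized Wronskians $\{\mc W_A : A\in(\m N^m)^{n+1}\}$. This reduces the entire problem to comparing that full zero set with the zero set of the Young-like subcollection.

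Next I would apply Walker's result from \cite{Walker}: on any tuple $(f_0,\dots,f_n)$, all of the generalized Wronskians $\mc W_A$ vanish if and only if those $\mc W_A$ with $A$ Young-like vanish. Phrased in terms of zero sets, this says precisely that the common zero set of the Young-like Wronskians coincides with the common zero set of the full collection. Chaining this equality with the identification from the previous paragraph yields the first assertion, namely that $ld(V)$ equals the zero set in $\m P^n$ of the Wronskians associated to Young-like sets.

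For the minimality clause I would transport the corresponding part of Walker's theorem along the same identification. Walker shows that the Young-like subcollection is the \emph{least} subcollection of generalized Wronskians whose vanishing is equivalent to the vanishing of the whole collection; since by Theorem~\ref{linwronski} the vanishing of the whole collection cuts out exactly $ld(V)$, any subcollection of the $\mc W_A$ whose common zero set equals $ld(V)$ has the same zero set as the full collection, hence satisfies Walker's hypothesis, and therefore must contain the Young-like subcollection. This gives the stated minimality.

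The content here is essentially formal: the genuine mathematics was already carried out in Theorem~\ref{linwronski} (identifying $ld(V)$ with the Wronskian locus) and in Walker's combinatorial reduction to Young-like sets. The only point requiring care is the bookkeeping that turns Walker's statement about \emph{pointwise vanishing equivalence} of Wronskians into a statement about \emph{equality of}, and \emph{minimality among}, subcollections defining $ld(V)$; since each $\mc W_A$ is a single $\Delta$-homogeneous $\Delta$-polynomial and membership in the zero set is exactly simultaneous vanishing, this translation is immediate, and I do not expect any real obstacle.
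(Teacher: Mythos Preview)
Your proposal is correct and matches the paper's approach exactly: the paper offers no separate proof but simply records the corollary as an immediate consequence of Theorem~\ref{linwronski} together with Walker's result cited just before the statement. Your write-up spells out the (straightforward) translation between Walker's pointwise vanishing equivalence and the equality of zero sets in $\m P^n$, and your handling of the minimality clause is the intended one.
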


\section{The catenary problem and related results}  \label{catsection}

In this section we discuss some conjectural questions around completeness and Kolchin's catenary problem. We also take the opportunity to pose stronger forms of the catenary conjecture for algebraic varieties that seem interesting in their own right. We begin by recalling the \emph{catenary problem}:

\begin{prob} Given an irreducible differential variety $V$ of dimension $d>0$ and an arbitrary point $p \in V$, does there exist a long gap chain beginning at $p$ and ending at $V$? By a long gap chain we mean a chain of irreducible differential subvarieties of length $\omega^m \cdot d$. The positive answer to this question is called the Kolchin catenary conjecture. 
\end{prob} 

Under stronger assumptions, various authors have established the existence of long gap chains.  When $p \in V$ satisfies certain nonsingularity assumptions, Johnson \cite{JohnsonCat} established the existence of the chain of subvarieties of $V$ starting at $p$. In \cite{Rosenfeld}, Rosenfeld proves a slightly weaker statement (also with nonsingularity assumptions) and expresses the opinion that the nonsingularity hypotheses might not be necessary; however, except for special cases, the hypotheses have not been eliminated. See \cite[pages 607-608]{BuiumCassidyKolchin} for additional details on the history of this problem.

In a different direction, Pong \cite{PongCat} answered the problem affirmatively, assuming that $V$ is an algebraic variety, but assuming nothing about the point $p$. Pong's proof invokes resolution of singularities (the ``nonsingularity" assumptions of \cite{Rosenfeld, JohnsonCat} are not equivalent to the classical notion of a nonsingular point on an algebraic variety). It is worth mentioning that even though Pong works in the ordinary case, $\Delta=\{\delta\}$, his approach and results readily generalize to the partial case. 

We also have the following weaker form of the catenary conjecture:

\begin{conj}[Weak Catenary Conjecture] \label{weakcat} 
For every positive dimensional irreducible differential variety $V \subseteq \m A^n$ and every zero-dimensional differential subvariety $W \subseteq V$, there is a proper irreducible differential subvariety $V_1$ of $V$ such that $V_1 \cap W \neq \emptyset$ and $V_1 \not \subseteq W$. 
\end{conj}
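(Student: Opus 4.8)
The plan is to derive this from the Kolchin catenary conjecture, exactly as advertised in the introduction; so what I would actually write is a proof of the implication ``Kolchin catenary $\Rightarrow$ weak catenary,'' noting in passing that the needed instance is available unconditionally when $V$ is an algebraic variety (by Pong's theorem). Throughout I assume $W\neq\emptyset$, as otherwise the desired conclusion $V_1\cap W\neq\emptyset$ is unattainable and the statement should be read as vacuous.

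First I would fix a point $p\in W$. Since $W\subseteq V$ with $V$ positive-dimensional and $W$ zero-dimensional, we have $W\subsetneq V$ and hence $V\not\subseteq W$. Writing $d=\dim V>0$, I apply the Kolchin catenary conjecture to the pair $(V,p)$ to obtain a long gap chain $\mathcal C$ of irreducible differential subvarieties of $V$ of length $\omega^m\cdot d$, well-ordered by inclusion, with least element $\{p\}$ and greatest element $V$. Because $\mathcal C$ is a chain whose least element is $\{p\}$, every member $C\in\mathcal C$ contains $p$, so $C\cap W\supseteq\{p\}\neq\emptyset$; thus every member already meets $W$, and the whole problem reduces to locating a member that is both proper in $V$ and not contained in $W$.

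Next I would consider $\mathcal S=\{\,C\in\mathcal C: C\subseteq W\,\}$. As $\mathcal C$ is totally ordered by inclusion, $\mathcal S$ is an initial segment; it contains $\{p\}$ and, since $V\not\subseteq W$, omits $V$. Because $\mathcal C$ is well-ordered, there is a least member $V_1$ of $\mathcal C$ with $V_1\not\subseteq W$. By the previous paragraph $V_1\cap W\neq\emptyset$, and by construction $V_1\not\subseteq W$, so the only remaining point is that $V_1\neq V$. Suppose instead $V_1=V$. Then every member of $\mathcal C$ other than $V$ lies in $W$, so $\mathcal C\setminus\{V\}$ is a gap chain of irreducible subvarieties of $W$ of length $\omega^m\cdot d\ge\omega^m$. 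This contradicts zero-dimensionality of $W$: by the identification recalled in Section~\ref{dimpol} a zero-dimensional $W$ has Lascar rank $\operatorname{RU}(W)<\omega^m$, and since the rank strictly increases along a gap chain, every gap chain of subvarieties of $W$ has length at most $\operatorname{RU}(W)+1<\omega^m$. Hence $V_1\subsetneq V$, and $V_1$ is the subvariety we seek.

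The combinatorics here are genuinely routine; the real obstacle is the input. The Kolchin catenary conjecture is open in general, which is precisely why this statement is posed as a conjecture rather than proved outright. The argument becomes an unconditional theorem exactly when the relevant instance of the catenary problem is known---most importantly when $V$ is algebraic, where Pong's theorem furnishes the required long gap chain through the arbitrary point $p$ with no nonsingularity assumptions. The one genuinely model-theoretic ingredient to verify with care is the length bound for gap chains inside $W$, namely that $\operatorname{RU}(W)<\omega^m$ forbids gap chains of length $\ge\omega^m$; this is where the equivalence between ``zero-dimensional'' and ``Lascar rank below $\omega^m$'' recalled in Section~\ref{dimpol} is doing the essential work.
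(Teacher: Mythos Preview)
Your approach is essentially the same as the paper's: both pick $p\in W$, invoke the Kolchin catenary conjecture to produce a long gap chain through $p$, and then argue that a zero-dimensional $W$ cannot absorb a chain of length $\ge\omega^m$, so some proper member of the chain must escape $W$ while still containing $p$. The only difference is the invariant used to bound chain lengths inside $W$: the paper appeals to the fact that Kolchin polynomials strictly increase along any proper containment of irreducible $\Delta$-closed sets, whereas you invoke Lascar rank. The Kolchin-polynomial route is the safer one here, since strict increase of $\omega_{V_1}<\omega_{V_2}$ for $V_1\subsetneq V_2$ irreducible is a standard fact, while strict increase of Lascar rank along a gap chain of irreducible closed sets is not immediate in $DCF_{0,m}$ (proper containment gives $\le$, not obviously $<$). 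The bound you want---no chain of length $\ge\omega^m$ inside a zero-dimensional $W$---is nonetheless true and follows directly from the Kolchin-polynomial argument, so your proof goes through once you swap the invariant; alternatively, you would need to justify the Lascar-rank monotonicity claim separately.
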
 

This conjecture (although it seems rather innocuous, a proof is not known) is a very easy consequence of the catenary conjecture. Indeed, pick $p \in W$ and pick a long gap chain starting at $p$. Then, since the Kolchin polynomials of the sets in the chain are not equal to each other, at some level the irreducible closed sets in the chain can not be contained in the set $W$. 

In Section \ref{compbound} we suggested the following

\begin{conj}\label{conbound}
A $\Delta$-closed $V \subseteq \m P^n$ is complete if and only if $\pi_2: V \times Y \rightarrow Y$ is a $\Delta$-closed map for every quasiprojective zero-dimensional differential variety $Y.$  
\end{conj}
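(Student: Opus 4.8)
The forward direction is immediate: if $V$ is complete then $\pi_2\colon V\times Y\to Y$ is $\Delta$-closed for \emph{every} quasiprojective $Y$, in particular for every zero-dimensional one. The plan is to prove the contrapositive of the converse: assuming $\pi_2$ is $\Delta$-closed for every zero-dimensional quasiprojective $Y$, I would show $V$ is complete. Suppose not. Since by Proposition~\ref{mainthm} completeness is equivalent to semi-closedness of $\pi_2$ for all quasiprojective $Y$, there is a quasiprojective $Y$ for which $\pi_2$ is not semi-closed; unwinding the definition of semi-closed, this yields a $\Delta$-closed $X\subseteq V\times Y$ with $\pi_2(X)$ not closed and with the missing set $W:=\pi_2(X)^{cl}\setminus\pi_2(X)$ nonempty and zero-dimensional. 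Localizing and passing to irreducible components (as in the proof of Proposition~\ref{mainthm}), I may assume $Y$ is affine irreducible, $X$ is irreducible, and $Z:=\pi_2(X)^{cl}$ is irreducible, so that $\pi_2(X)=Z\setminus W$ is a dense constructible subset of $Z$.

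I would then induct on $\dim Z$. If $\dim Z=0$, then $Z$ itself is a zero-dimensional quasiprojective variety, $X\subseteq V\times Z$, and $\pi_2\colon V\times Z\to Z$ has the non-closed image $\pi_2(X)$, directly contradicting the hypothesis. So assume $\dim Z>0$. Here I would apply the Weak Catenary Conjecture (Conjecture~\ref{weakcat}) to the positive-dimensional irreducible $Z$ and the zero-dimensional subvariety $W^{cl}$, obtaining a proper irreducible $Z_1\subsetneq Z$ with $Z_1\cap W^{cl}\neq\emptyset$ and $Z_1\not\subseteq W^{cl}$. Restricting, set $X_1:=X\cap(V\times Z_1)$, so $\pi_2(X_1)=\pi_2(X)\cap Z_1=Z_1\setminus W$. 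Since $Z_1$ is irreducible and $Z_1\not\subseteq W^{cl}$, the dense subset $Z_1\setminus W^{cl}$ lies in $\pi_2(X_1)$, whence $\pi_2(X_1)^{cl}=Z_1$ and the new missing set is exactly $W_1=Z_1\cap W$. Provided $W_1\neq\emptyset$, the pair $(X_1,Z_1)$ is again a configuration of the same type but with $\dim Z_1<\dim Z$, so the induction closes (and when $\dim Z_1=0$ this is the base case, giving the contradiction).

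The step I expect to be the main obstacle is guaranteeing $W_1=Z_1\cap W\neq\emptyset$: the subvariety produced by Weak Catenary must meet the \emph{genuine} missing set $W$ rather than only the closure-boundary $W^{cl}\setminus W\subseteq\pi_2(X)$. There is a real tension here, since taking $W^{cl}$ as the incidence target (as the conjecture, phrased for closed subvarieties, requires) buys density of the restricted image but not the survival of an omitted point, while forcing incidence with $W$ itself risks losing density once the dimension drops to zero. To reconcile the two I would use the observation that, as $W$ is dense in $W^{cl}$, some irreducible component $C$ of $W^{cl}$ has its generic point $p$ lying in $W$; because $C$ is a \emph{maximal} irreducible subvariety of $W^{cl}$, any irreducible $Y'$ with $C\subsetneq Y'\subseteq Z$ automatically satisfies $Y'\not\subseteq W^{cl}$ while still containing $p\in W$, and hence witnesses a non-closed projection. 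This reduces the whole problem to producing a \emph{zero-dimensional} irreducible $Y'$ strictly above the zero-dimensional $C$ inside $Z$ and feeding it back as the base. Extracting such an intermediate subvariety---equivalently, arranging the incidence in the Weak Catenary step so that the retained point is the generic point of a component of $W^{cl}$---is precisely the catenary-flavoured difficulty, and is the reason the argument must remain conditional on Conjecture~\ref{weakcat}.
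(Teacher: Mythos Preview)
Your approach matches the paper's: the statement is a conjecture, and the paper only proves it conditionally on the Weak Catenary Conjecture (Conjecture~\ref{weakcat}) in the Lemma immediately following. Both arguments use Proposition~\ref{mainthm} to reduce to a configuration with zero-dimensional missing set $W=\pi_2(X)^{cl}\setminus\pi_2(X)$, and then descend via Weak Catenary to a zero-dimensional base that witnesses incompleteness. The paper simply says ``iterating'' Weak Catenary rather than setting up an explicit induction.

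One correction: your induction on $\dim Z$ is not well-founded. A proper irreducible differential subvariety $Z_1\subsetneq Z$ need not have strictly smaller differential dimension; for instance, with a single derivation, $Z=\m A^1\times\{y:\delta y=0\}$ and $Z_1=\m A^1\times\{0\}$ both have dimension $1$. What does strictly decrease under proper irreducible inclusion is the Kolchin polynomial, and since Kolchin polynomials are well-ordered the descent terminates in finitely many steps at a zero-dimensional $Y'$; this is what the paper's ``iterating'' tacitly uses. Replace ``induct on $\dim Z$'' by ``induct on $\omega_Z$'' (or simply argue by Noetherian descent) and the structure is fine.

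You are right to flag the $W$ versus $W^{cl}$ issue: Weak Catenary as stated applies to a \emph{closed} zero-dimensional subvariety, so a literal application yields only $Y'\cap W^{cl}\neq\emptyset$, whereas the contradiction needs an actual omitted point, i.e., $Y'\cap W\neq\emptyset$. The paper applies Weak Catenary directly to $W$ without comment. Your proposed fix via a component $C$ of $W^{cl}$ whose generic point $p$ lies in $W$ is natural, but notice that Weak Catenary applied to $(Z,C)$ only guarantees $Y'\cap C\neq\emptyset$, not $C\subseteq Y'$ or $p\in Y'$; pinning the specific point $p$ really asks for a chain beginning at $p$, which is the full catenary conjecture rather than the weak form. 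So this subtlety persists in both your sketch and the paper's argument.
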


Even though we are not able to prove this, we show that it is a consequence of the weak catenary conjecture.

\begin{lem} 
The Weak Catenary Conjecture implies Conjecture \ref{conbound}.
\end{lem}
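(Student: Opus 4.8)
The plan is to derive Conjecture~\ref{conbound} from the weak catenary conjecture (Conjecture~\ref{weakcat}) by feeding it into Proposition~\ref{mainthm}. One direction of Conjecture~\ref{conbound} is immediate and needs nothing: if $V$ is complete then $\pi_2\colon V\times Y\to Y$ is closed for \emph{every} quasiprojective $Y$, in particular for the zero-dimensional ones. So the content is the converse, and by Proposition~\ref{mainthm} it is enough to show that, granting Conjecture~\ref{weakcat} together with closedness of $\pi_2$ over all zero-dimensional bases, $\pi_2\colon V\times Y\to Y$ is \emph{semi-closed} for every quasiprojective $Y$. I would argue by contradiction. If semi-closedness fails, there are a quasiprojective $Y$ and a $\Delta$-closed $Z\subseteq V\times Y$ with $\pi_2(Z)$ not closed and $\pi_2(Z)^{cl}\setminus\pi_2(Z)$ zero-dimensional. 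Exactly as at the start of the proof of Proposition~\ref{mainthm}, working locally and passing to an irreducible component, I may assume $Y$ is affine and irreducible, $Z$ is irreducible, and $Y=\pi_2(Z)^{cl}$. Write $D=Y\setminus\pi_2(Z)$, so that $D^{cl}$ is a nonempty zero-dimensional $\Delta$-closed subvariety of $Y$. If $Y$ were zero-dimensional the hypothesis would force $\pi_2(Z)=Y$ and $D=\emptyset$, a contradiction; hence $Y$ is positive-dimensional.

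Next I would descend on $Y$. Fix a bad point $q\in D$. Applying Conjecture~\ref{weakcat} to the positive-dimensional irreducible $Y$ and the zero-dimensional subvariety $D^{cl}$ produces a proper irreducible $\Delta$-subvariety $Y_1\subsetneq Y$ with $Y_1\not\subseteq D^{cl}$ and (see the final paragraph) $q\in Y_1$. Put $Z_1=Z\cap(V\times Y_1)$, a $\Delta$-closed subset of $V\times Y_1$ with $\pi_2(Z_1)=\pi_2(Z)\cap Y_1$. Since $Y_1\not\subseteq D^{cl}$, the nonempty open set $Y_1\setminus D^{cl}$ is dense in the irreducible $Y_1$ and is contained in $\pi_2(Z_1)$, so $\pi_2(Z_1)^{cl}=Y_1$; consequently $\pi_2(Z_1)$ is closed in $Y_1$ precisely when $Y_1\cap D=\emptyset$, and its frontier is exactly $Y_1\cap D$, again zero-dimensional. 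As $q\in Y_1\cap D$, the pair $(Y_1,Z_1)$ is a configuration of the same kind, with $Y_1$ a proper $\Delta$-closed subset of $Y$ still containing $q$. Iterating yields a strictly descending chain $Y=Y_0\supsetneq Y_1\supsetneq\cdots$ of irreducible subvarieties, each containing $q$ and each carrying a non-closed image with zero-dimensional frontier. Because the Kolchin topology is Noetherian (Ritt--Raudenbush), this chain is finite, hence terminates; and it can only terminate at a $Y_k$ to which Conjecture~\ref{weakcat} no longer applies, i.e.\ a zero-dimensional $Y_k$. But then $\pi_2\colon V\times Y_k\to Y_k$ is not closed over a zero-dimensional base, contradicting the hypothesis. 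This shows $\pi_2$ is semi-closed for every $Y$, whence $V$ is complete by Proposition~\ref{mainthm}.

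The delicate point, and the step I expect to be the main obstacle, is guaranteeing at each stage that the intermediate variety meets the genuine image-complement $D$ rather than only its closure $D^{cl}$. In general $D=\pi_2(Z)^{cl}\setminus\pi_2(Z)$ is merely $\Delta$-constructible and dense in $D^{cl}$, so its own frontier $D^{cl}\setminus D$ may be nonempty; a subvariety satisfying only the bare conclusion $Y_1\cap D^{cl}\neq\emptyset$ of Conjecture~\ref{weakcat} could meet $D^{cl}$ entirely inside that lower-dimensional frontier, in which case $Y_1\cap D=\emptyset$ and non-closedness is lost. I would resolve this by routing the whole descent through the single fixed bad point $q\in D$, insisting that every $Y_i$ contain $q$. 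This is legitimate because the weak catenary conjecture, in the form in which it is extracted from the Kolchin catenary conjecture, actually provides an intermediate variety \emph{through a prescribed point}: one takes a long gap chain beginning at $q\in D^{cl}$ and, comparing Kolchin polynomials along the chain, selects a chain member that contains $q$ but is not contained in $D^{cl}$. Keeping $q$ fixed throughout the iteration forces $q\in Y_i\cap D$ at every stage, so non-closedness genuinely persists all the way down to the terminal zero-dimensional level, which is exactly what produces the contradiction with the hypothesis.
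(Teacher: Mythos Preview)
Your argument follows essentially the same route as the paper's: reduce via Proposition~\ref{mainthm} to semi-closedness, suppose a failure gives an irreducible $Y$ with $\pi_2(Z)^{cl}=Y$ and zero-dimensional frontier, then iterate the weak catenary conjecture to descend to a zero-dimensional base that still witnesses non-closedness, contradicting the hypothesis. The paper compresses the iteration into a single sentence (``iterating rather'') and lands directly on a zero-dimensional irreducible $Y'$ with $Y'\cap W\neq\emptyset$ and $Y'\not\subseteq W$, then uses irreducibility of $Y'$ to see that $\pi_2(X)\cap Y'$ cannot be closed; your Noetherian descent is just an explicit unfolding of that same iteration.

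Where you differ is in flagging, correctly, the distinction between the frontier $D=\pi_2(Z)^{cl}\setminus\pi_2(Z)$ and its closure $D^{cl}$: Conjecture~\ref{weakcat} as stated only promises an intermediate variety meeting the closed set $D^{cl}$, which need not meet $D$ itself. The paper's proof is silent on this point and simply writes $Y'\cap W\neq\emptyset$ as though it followed. Your fix---tracking a single bad point $q\in D$ throughout the descent---is exactly the right idea, but note that it invokes a \emph{pointed} strengthening of Conjecture~\ref{weakcat} (the intermediate variety passes through a prescribed point) rather than the conjecture as literally formulated. You justify this by appealing to the derivation of Conjecture~\ref{weakcat} from the full catenary conjecture, which does produce a chain through any chosen $q$; that is legitimate, and indeed the paper's own discussion immediately after Conjecture~\ref{weakcat} extracts the weak version in precisely this pointed way. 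So your proof is correct and, on this particular subtlety, more careful than the paper's, at the cost of leaning on the pointed form rather than the bare statement of Conjecture~\ref{weakcat}.
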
 
\begin{proof} 
Towards a contradiction, suppose that $\pi_2:V\times Y\to Y$ is a closed map for every zero-dimensional $Y$, but $V$ is not complete. Then, by Proposition \ref{mainthm}, there must be some $Y$, a positive dimensional irreducible affine differential variety, and an irreducible closed set $X \subseteq V \times Y_1$ such that $\pi_2 (X)$ is not closed, $\pi_2(X)^{cl}$ is positive dimensional, and $W:=\pi_2(X)^{cl}\setminus \pi_2(X)$ is zero-dimensional. We will obtain the desired contradiction by finding a zero-dimensional $Y'$ which witnesses incompleteness. 

Applying (iterating rather) the weak catenary conjecture, we obtain a zero-dimensional and proper irreducible subvariety $Y'$ of $\pi_2(X)^{cl}$ such that  $Y' \cap W \neq \emptyset$ and $Y' \not \subseteq W$. We claim that this $Y'$ witnesses incompleteness. Let $X'=X\cap(V\times Y')$. Then we claim that $\pi_2(X')=\pi_2(X)\cap Y'$ is not closed. Suppose it is, then it would be a proper closed subset of $Y'$. The fact that $W\cap Y'$ is also a proper closed subset of $Y$, would contradict irreduciblity of $Y'$. Thus, $\pi_2(X')$ is not closed, and the result follows.
\end{proof}

\begin{rem}  Note that restricting to specific families of differential varieties $V$ does not necessarily restrict the varieties on which one applies the weak catenary conjecture in the proof of the lemma. Thus, for our method of proof, Conjecture \ref{weakcat} is (a priori) used in full generality.
\end{rem}

\subsection{A stronger form of the catenary problem for algebraic varieties}

In this section we describe some of the difficulties of the catenary problem for algebraic varieties. As we mentioned already, this case follows from results of Pong \cite{PongCat}; however, in what follows we propose a different approach to this problem.

A differential ring is called a \emph{Keigher ring} if the radical of every differential ideal is again a differential ideal. The rings we will be considering will be assumed to be Keigher rings. Note that every Ritt algebra is a Keigher ring (see for instance \cite[\S1]{MMP}).
 
Given $f: A \rightarrow B$ a differential homomorphism of Keigher rings, we have an induced map $f^*: Spec \, B \rightarrow Spec \, A$ given by $f^* ( \mf p )  = f^{-1} ( \mf p )$. We denote by $f^*_ \Delta : Spec ^ \Delta B \rightarrow Spec ^ \Delta A$ the restriction of the map $f^*$ to the differential spectrum. We have the following differential analogs of the going-up and going-down properties:

\begin{defn}\label{goingupdown} 
Suppose we are given some chain $\mf p_1 \subseteq \ldots \subseteq \mf p_n$ with $\mf p_i \in Spec^\Delta \, f(A)$ and any $\mf q_1 \subseteq \ldots \subseteq \mf q _m  \in Spec^\Delta \, B$ such that for each $i \leq m,$ $ \mf q_i \cap f(A) =\mf p_i$. We say that $f$ has the \emph{going-up property for differential ideals} if given any such chains $\mf p$ and $\mf q$, we may extend the second chain to $\mf q_1 \subseteq \ldots \subseteq \mf q_n$ where $\mf q_i\in Spec^\Delta B$ such that for each $i \leq n,$ $\mf q_i \cap f(A) = \mf p_i$. One can analogously define when $f$ has the \emph{going-down property for differential ideals}.
\end{defn}

When $(A, \Delta) \subseteq (B,\Delta) $ are integral domains, $B$ is integral over $A$, and $A$ is integrally closed, then the differential embedding $A \subseteq B$ has the going-down property for differential ideals. Dropping the integrally closed requirement on $A$, one can still prove the going-up property for differential ideals \cite[Proposition 1.1]{PongCat}. In what follows we will see how these results are consequences of their classical counterparts in commutative algebra.

Let us review some developments of differential algebra which are proved in \cite{Trushin}. We will prove the results which we need here in order to keep the exposition self-contained and tailored to our needs. Let $f: A \rightarrow B$ be a differential homomorphism of Keigher rings. The fundamental idea, which Trushin calls \emph{inheritance}, is to consider one property of such a map $f$ considered only as a map of rings and another property of $f$ as a map of differential rings and prove that the properties are equivalent. As we will see, in certain cases, one can reduce the task of proving various differential algebraic facts to proving corresponding algebraic facts. 

\begin{lem} \label{one}
Let $\mf p \subset A$ be a prime differential ideal. The following are equivalent: 
\begin{enumerate} 
\item $\mf p = f^{-1} (f  (\mf p ) B)$,
\item $(f^*)^{-1} (\mf p ) \neq \emptyset$,
\item $(f_\Delta ^*)^{-1} ( \mf p ) \neq \emptyset$.
\end{enumerate} 
\end{lem}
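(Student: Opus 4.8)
The plan is to establish the three conditions as equivalent via the cycle $(1)\Rightarrow(3)\Rightarrow(2)\Rightarrow(1)$, which isolates a single substantive step, the other two implications being formal. For $(3)\Rightarrow(2)$: a prime differential ideal is in particular a prime ideal, so any $\mathfrak{q}\in Spec^\Delta B$ with $f_\Delta^*(\mathfrak{q})=\mathfrak{p}$ is also a point of $Spec\,B$ with $f^*(\mathfrak{q})=\mathfrak{p}$, giving $(f^*)^{-1}(\mathfrak{p})\neq\emptyset$ at once. For $(2)\Rightarrow(1)$: if $f^{-1}(\mathfrak{q})=\mathfrak{p}$ then $f(\mathfrak{p})\subseteq\mathfrak{q}$, hence $f(\mathfrak{p})B\subseteq\mathfrak{q}$ and $f^{-1}(f(\mathfrak{p})B)\subseteq f^{-1}(\mathfrak{q})=\mathfrak{p}$; since the reverse inclusion $\mathfrak{p}\subseteq f^{-1}(f(\mathfrak{p})B)$ always holds, equality follows. (Together with the classical contraction-of-primes criterion, these two already yield the purely ring-theoretic equivalence $(1)\Leftrightarrow(2)$, in the spirit of inheritance: the ring-theoretic membership of $\mathfrak{p}$ in the image is governed by the concrete algebraic condition $(1)$.)

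The real content is $(1)\Rightarrow(3)$, which is where the differential structure and the Keigher hypothesis enter. I would set $S=f(A\setminus\mathfrak{p})$, a multiplicative subset of $B$, and consider the extension $\mathfrak{a}=f(\mathfrak{p})B$. First I would observe that $\mathfrak{a}$ is a \emph{differential} ideal: since $f$ commutes with the derivations and $\mathfrak{p}$ is a differential ideal, for $p\in\mathfrak{p}$, $b\in B$ and $\delta\in\Delta$ one has
$$\delta\bigl(b\,f(p)\bigr)=\delta(b)\,f(p)+b\,f(\delta p)\in f(\mathfrak{p})B,$$
because $\delta p\in\mathfrak{p}$. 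Next, hypothesis $(1)$ says exactly that $\mathfrak{a}$ avoids $S$: if $f(a)\in\mathfrak{a}$ with $a\notin\mathfrak{p}$, then $a\in f^{-1}(\mathfrak{a})=\mathfrak{p}$, a contradiction, so $\mathfrak{a}\cap S=\emptyset$.

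It then remains to enlarge $\mathfrak{a}$ to a prime differential ideal still disjoint from $S$, and this is the step I expect to be the main obstacle, precisely where being a Keigher ring is indispensable. I would invoke the differential analog of prime avoidance: in a Keigher ring, a differential ideal disjoint from a multiplicative set is contained in a prime differential ideal disjoint from that set. Concretely, one passes to the localization $S^{-1}B$, which is again a Keigher ring, notes that $S^{-1}\mathfrak{a}$ is a proper differential ideal, takes its radical (differential by the Keigher property), and uses that a proper radical differential ideal in a Keigher ring lies under a prime differential ideal; alternatively one applies Zorn directly to the differential ideals meeting $S$ trivially, the Keigher property supplying the primality of a maximal such ideal. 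Calling the resulting prime differential ideal $\mathfrak{q}\in Spec^\Delta B$, we have $\mathfrak{q}\supseteq\mathfrak{a}\supseteq f(\mathfrak{p})$ and $\mathfrak{q}\cap S=\emptyset$; hence $f^{-1}(\mathfrak{q})\supseteq\mathfrak{p}$ while $f^{-1}(\mathfrak{q})\cap(A\setminus\mathfrak{p})=\emptyset$, forcing $f^{-1}(\mathfrak{q})=\mathfrak{p}$ and so $(f_\Delta^*)^{-1}(\mathfrak{p})\neq\emptyset$, closing the cycle. The one point I would check carefully is that the chosen differential avoidance statement is available in the full Keigher generality in which the section is framed; in characteristic zero it is automatic since every Ritt algebra is Keigher, but phrasing it at the level of Keigher rings is what aligns the argument with Trushin's inheritance framework.
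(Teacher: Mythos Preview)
Your argument is correct. The mathematical core is the same as the paper's: both reduce to the fact that a nonzero Keigher ring has nonempty differential spectrum, applied to the fiber ring $S^{-1}B/S^{-1}\mathfrak a$ (equivalently $B_{\mathfrak p}/\mathfrak p B_{\mathfrak p}$). The only difference is organizational. The paper separates the classical from the differential: it cites Atiyah--Macdonald for $(1)\Leftrightarrow(2)$, notes $(3)\Rightarrow(2)$ is trivial, and then proves $(2)\Rightarrow(3)$ in one line by observing that the ordinary fiber $(f^*)^{-1}(\mathfrak p)\cong \operatorname{Spec}\,B_{\mathfrak p}/\mathfrak p B_{\mathfrak p}$ is nonempty exactly when that ring is nonzero, whence (being Keigher) its differential spectrum, which is $(f_\Delta^*)^{-1}(\mathfrak p)$, is nonempty. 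Your cycle $(1)\Rightarrow(3)\Rightarrow(2)\Rightarrow(1)$ instead folds the classical step into a direct $(1)\Rightarrow(3)$, building $S=f(A\setminus\mathfrak p)$ and $\mathfrak a=f(\mathfrak p)B$ by hand. This buys you a self-contained argument that does not invoke the Atiyah--Macdonald reference, at the cost of rederiving what that reference packages; the paper's route, by contrast, makes the ``inheritance'' philosophy more transparent, since the differential upgrade $(2)\Rightarrow(3)$ is visibly isolated from the purely ring-theoretic $(1)\Leftrightarrow(2)$. The Keigher fact you flag as needing care is exactly the one the paper cites from the differential-schemes literature, so no gap there.
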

\begin{proof} $(1) \Leftrightarrow (2)$ is precisely \cite[Proposition 3.16]{AtiyahMac}. $(3) \Rightarrow (2)$ is trivial. To show that $(2) \Rightarrow (3),$ note that $(f^*)^{-1} (\mf p)$ is homeomorphic to $Spec \, B_ \mf p / \mf p B _ \mf p$. The fact that the fiber is nonempty means that $B_ \mf p / \mf p B _ \mf p$ is not the zero ring. Since it is a Keigher ring, $Spec^\Delta \, B_ \mf p / \mf p B _ \mf p$ is nonempty (see \cite{differentialschemes}) and naturally homeomorphic to $(f^*_ \Delta)^{-1}  ( \mf p)$. 
\end{proof}

The following results are easy applications of the lemma:

\begin{corr} \label{twoone} \
\begin{enumerate}
\item [(i)] If $f^*$ is surjective, so is $f^*_\Delta$. 
\item [(ii)] If $f$ has the going-up property, then $f$ has the going-up property for differential ideals. 
\item [(iii)] If $f$ has the going-down property, then $f$ has the going-down property for differential ideals. 
\end{enumerate}
\end{corr}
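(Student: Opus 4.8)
The goal is to prove Corollary \ref{twoone}, which asserts three inheritance statements: surjectivity, going-up, and going-down all transfer from $f^*$ to $f^*_\Delta$. The plan is to deduce each of the three parts directly from Lemma \ref{one}, which is the engine: it equates the nonemptiness of an ordinary fiber $(f^*)^{-1}(\mf p)$ with the nonemptiness of the differential fiber $(f^*_\Delta)^{-1}(\mf p)$ over a prime differential ideal $\mf p$. The key observation throughout is that going-up and going-down are really just assertions about lifting chains one prime at a time, and each such lift amounts to solving a relative version of the ``nonempty fiber'' problem, which is exactly what the lemma controls.

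\emph{Part (i).} First I would handle surjectivity, which is the cleanest. If $f^*$ is surjective, then for every prime differential ideal $\mf p \subseteq A$ the ordinary fiber $(f^*)^{-1}(\mf p)$ is nonempty (surjectivity of $f^*$ gives a preimage in $\operatorname{Spec} B$; one then invokes condition (2) of Lemma \ref{one}). By the equivalence $(2)\Leftrightarrow(3)$ in Lemma \ref{one}, the differential fiber $(f^*_\Delta)^{-1}(\mf p)$ is also nonempty. Since this holds for every $\mf p \in \operatorname{Spec}^\Delta A$, the map $f^*_\Delta$ is surjective.

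\emph{Parts (ii) and (iii).} For going-up and going-down the plan is the standard reduction of chain-lifting to a single step, followed by a localization/quotient maneuver that moves the single step into the situation covered by the lemma. For going-up: given a differential prime chain $\mf p_1 \subseteq \cdots \subseteq \mf p_n$ in $A$ and a lift $\mf q_1 \subseteq \cdots \subseteq \mf q_m$ of its initial segment lying over it in $\operatorname{Spec}^\Delta B$, one must extend by one more prime $\mf q_{m+1}$ lying over $\mf p_{m+1}$ and containing $\mf q_m$. This amounts to finding a differential prime in the fiber of the induced differential homomorphism $B/\mf q_m \to$ (appropriate quotient) lying over the image of $\mf p_{m+1}$; the ordinary going-up hypothesis on $f$ furnishes the corresponding non-differential prime, and Lemma \ref{one} (applied to the induced map on quotients, which is again a homomorphism of Keigher rings since quotients of Keigher rings by differential ideals are Keigher) upgrades this to a differential prime. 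Going-down is entirely symmetric, replacing quotients by $\mf q_m$ with localizations and running the chain downward; again the ordinary going-down property supplies the ring-theoretic lift and the lemma makes it differential.

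The main obstacle I anticipate is verifying that the reduced single-step problems genuinely fall under the hypotheses of Lemma \ref{one} — specifically, that after passing to the relevant quotient or localization the map remains a differential homomorphism of Keigher rings, and that the primes involved remain \emph{differential} primes so that the lemma applies. Because the paper assumes all rings are Keigher and that the class of Keigher rings is stable under the quotient and localization operations used here (radicals of differential ideals remaining differential is preserved under these operations), this bookkeeping should go through, but it is where the argument needs care rather than in any deep new idea; the conceptual content is entirely carried by Lemma \ref{one}.
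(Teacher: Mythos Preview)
Your proposal is correct and follows exactly the approach the paper intends: the paper gives no explicit proof at all, merely declaring the corollary to be ``easy applications of the lemma,'' and you have accurately unpacked what those applications are. Part (i) is immediate from $(2)\Leftrightarrow(3)$ of Lemma~\ref{one}, and for (ii) and (iii) your reduction of chain-lifting to a single step followed by passage to the appropriate quotient (for going-up) or localization (for going-down), where Lemma~\ref{one} applies to the induced map of Keigher rings, is precisely the intended argument. One small slip: in your going-up sketch the induced map should read $f(A)/\mf p_m \to B/\mf q_m$ (or $A/f^{-1}(\mf q_m)\to B/\mf q_m$), not a map out of $B/\mf q_m$; but the idea is clearly right.
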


Of course, by applying the previous corrollary to integral extensions with the standard additional hypotheses, we get the desired analogs of the classical going-up and going-down properties (see \cite{PongCat} or \cite{Trushin}, where the results were reproved). 

\begin{prop}\label{downdownbaby} Suppose that $A$ is a Ritt algebra, $Spec^\Delta \, A$ is Noetherian, $B$ is a finitely generated differential ring over $A$,  and the map $f:A \rightarrow B$ is the embedding map. Then the following are equivalent. 
\begin{enumerate} 
\item $f$ has the going-down property for differential ideals,
\item $f_\Delta^*$ is an open map (with respect to the $\Delta$-topology).
\end{enumerate}
\end{prop}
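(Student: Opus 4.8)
The plan is to transport the classical equivalence between the going-down property and openness of the induced map on spectra into the differential setting, using Trushin's inheritance (Lemma~\ref{one} and Corollary~\ref{twoone}) to compensate for the fact that $A$ is not assumed Noetherian as a plain ring and that $B$ is only finitely generated as a \emph{differential} $A$-algebra, so that the underlying ring map $A\to B$ is typically very far from being of finite type. Accordingly, the only finiteness I would genuinely exploit is the Noetherianity of $\operatorname{Spec}^\Delta A$ together with the differential finite generation of $B$, and the whole argument would be carried out inside the Kolchin topology on $\operatorname{Spec}^\Delta B$.

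For $(2)\Rightarrow(1)$, I would first observe that the going-down property for differential ideals (Definition~\ref{goingupdown}), in its basic instance and then by iteration, is precisely the statement that generizations lift along $f_\Delta^*$, i.e.\ that $f_\Delta^*$ is \emph{generizing}; so it suffices to show that an open $f_\Delta^*$ is generizing. Given $\mathfrak p'\subseteq\mathfrak p$ in $\operatorname{Spec}^\Delta A$ and $\mathfrak q\in\operatorname{Spec}^\Delta B$ with $f_\Delta^*(\mathfrak q)=\mathfrak p$, each $\Delta$-open neighborhood of $\mathfrak q$ has open image containing $\mathfrak p$, hence containing the generization $\mathfrak p'$ (open sets are stable under generization). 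Passing to the differential localization of $B$ at $\mathfrak q$ (the derivations extend to the localization, so this is again a Keigher ring whose $\Delta$-spectrum is the set of generizations of $\mathfrak q$), the fiber over $\mathfrak p'$ is nonempty at the level of ordinary spectra, and Lemma~\ref{one} upgrades this to a nonempty $\Delta$-fiber, producing the desired $\mathfrak q'\subseteq\mathfrak q$ with $f_\Delta^*(\mathfrak q')=\mathfrak p'$.

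For $(1)\Rightarrow(2)$, I would mimic the classical proof that going-down plus constructibility of images forces openness. Two ingredients are needed. First, a \emph{differential Chevalley theorem}: for the finitely generated differential embedding $f$, the map $f_\Delta^*$ sends $\Delta$-constructible sets to $\Delta$-constructible sets; this is where the differential finite generation of $B$ enters, and where I expect to lean hardest on inheritance, extracting the classical constructibility of images and transferring it to the $\Delta$-spectrum via the fiberwise-nonemptiness dictionary of Lemma~\ref{one} and Corollary~\ref{twoone}. Second, the purely topological fact that in the Noetherian space $\operatorname{Spec}^\Delta A$ a constructible set stable under generization is open. Granting these, for any basic $\Delta$-open $U\subseteq\operatorname{Spec}^\Delta B$ the image $f_\Delta^*(U)$ is $\Delta$-constructible (Chevalley) and stable under generization (since going-down makes $f_\Delta^*$ generizing, exactly as above), hence $\Delta$-open; thus $f_\Delta^*$ is an open map.

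The main obstacle is the differential Chevalley step. The honest difficulty is that the underlying ring map $A\to B$ is not of finite type, so the classical Chevalley theorem cannot be invoked directly; its differential incarnation must instead be assembled from Trushin's inheritance together with the Noetherianity of $\operatorname{Spec}^\Delta A$, by reducing image computations to the nonemptiness-of-fiber statements already established. Once constructibility of images and the ``constructible plus generizing equals open'' principle are available for the $\Delta$-spectrum, both implications close up quickly, and the remaining localization and lifting arguments are routine.
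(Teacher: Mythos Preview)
Your proposal is correct and follows essentially the same route as the paper: for $(2)\Rightarrow(1)$ both you and the paper localize at $\mathfrak q$, use the classical identity $f^*(\operatorname{Spec} B_{\mathfrak q})=\bigcap_{t\notin\mathfrak q} f^*(\operatorname{Spec} B_t)$, and invoke Lemma~\ref{one} to upgrade nonemptiness of the ordinary fiber to nonemptiness of the $\Delta$-fiber; for $(1)\Rightarrow(2)$ both arguments combine ``going-down $\Rightarrow$ generizing'' with $\Delta$-constructibility of $f_\Delta^*(\operatorname{Spec}^\Delta B_t)$ and Noetherianity of $\operatorname{Spec}^\Delta A$ to conclude openness. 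The only difference is cosmetic: you phrase $(1)\Rightarrow(2)$ via ``constructible and stable under generization is open,'' while the paper checks directly that the image meets every irreducible closed $Z$ it touches in a dense (hence open) subset; and the differential Chevalley step you flag as the main obstacle is not proved in the paper either---it is simply cited from Trushin \cite[Statement 11]{Trushin}.
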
 
\begin{proof}
Let us prove that (2) implies (1). Let $ \mf q \in Spec ^ \Delta \, B$ and let $\mf p = f^{-1} ( \mf q).$ Since we are interested in differential ideals contained in $\mf q$, it will be useful to consider the local ring $B_ \mf q$, and we note that $B_ \mf q = \varinjlim _{t \in B \backslash \mf q } B_t$. 

By \cite[Exercise 26 of Chapter 3]{AtiyahMac}, $f^* (Spec \, B_ \mf q) = \bigcap _{t \in B \backslash \mf q} f  ^* ( Spec  \, B_t)$. Now, by Corollary \ref{twoone}, surjectivity of $f^*$ implies surjectivity of $f_\Delta^*$, so 
$$f^* _\Delta( Spec ^ \Delta \, B _ \mf q ) = \bigcap _{t\in B \backslash \mf q} f_ \Delta ^* (Spec^ \Delta ( B_t )).$$ Since $f_\Delta ^*$ is an open map, $f^* _\Delta ( Spec ^ \Delta \, B_t )$ is an open neighborhood of $ \mf p$ and so it contains $Spec ^ \Delta  \,  A_ \mf p$. 

We have proved that, for any $ \mf q \in Spec ^ \Delta \, B$, the induced map $f_\Delta ^* :Spec ^ \Delta \, B_ \mf q \rightarrow Spec ^ \Delta A_ \mf p$ is a surjective map, where $\mf p = f^{-1} ( \mf q)$. Since differential ideals contained in $ \mf p$ correspond to differential ideals in $A_ \mf p$, we have established the going-down property for differential ideals. 

Now we prove that (1) implies (2). Take $\mf p \in  f_ \Delta ^* (Spec ^\Delta \, B_t )$ with $f_\Delta ^* ( \mf q) =\mf p$. Take some irreducible closed subset $Z \subseteq Spec ^ \Delta \, A$ for which $Z \cap  f_ \Delta ^* (Spec ^\Delta \, B_t )$ is nonempty. Now take some $\mf p_1 \in Spec ^ \Delta \, A$ with $\mf p_1 \subset \mf p$. By the going-down property for differential ideals, $ \mf p_1 = f^*_\Delta (\mf q_1)$ for some $ \mf q_1 \in Spec^ \Delta \, B$. Noting that $\mf q_1$ is in $Spec ^\Delta \, B_t$, we see that $ f_ \Delta ^* (Spec ^\Delta \, B_t ) \cap Z$ is dense in $Z$. Since the set $ f_ \Delta ^* (Spec ^\Delta \, B_t ) $ is constructible \cite[Statement 11]{Trushin} and thus contains an open subset of its closure, $Z \cap f_ \Delta ^* (Spec ^\Delta \, B_t ) $ contains an open subset of $Z$. This holds for arbitrary $Z$, so $f_ \Delta ^* (Spec ^\Delta \, B_t ) $ is open. 
\end{proof}

In many circumstances the existance of long gap chains can be deduced from the existance of such chains in affine space (which are well known):

\begin{fact} \label{affinechain} Let $p \in \m A^d $ be an arbitrary point. Then there is a long gap chain defined over $\mathbb Q$ starting at $p$ and ending at $\m A^d$.
\end{fact}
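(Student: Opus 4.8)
The plan is to reduce to the one–variable case and to build the chain one coordinate at a time. Since $\m A^d$ is homogeneous under the translation action and this action is defined over $\mathbb Q$, after translating we may assume $p$ is the origin; a chain through the origin then yields, upon translating back, a chain through $p$ (this is the sense in which the chain is ``defined over $\mathbb Q$''). Writing $\m A^d=\m A^1\times\cdots\times\m A^1$, I would first construct a strictly increasing chain
$$\{0\}=W_0\subsetneq W_1\subsetneq\cdots\subsetneq W_{\omega^m}=\m A^1$$
of irreducible $\mathbb Q$-closed subsets of $\m A^1$ of length $\omega^m$, and then concatenate its translates across the $d$ factors: first climb in the first coordinate (the remaining ones held at $0$), producing the sets $W_\beta\times\{0\}\times\cdots\times\{0\}$; having reached $\m A^1\times\{0\}^{d-1}$, climb in the second coordinate via $\m A^1\times W_\beta\times\{0\}^{d-2}$; and so on. Each stage is a product of irreducible $\Delta$-closed sets, hence irreducible over $\mc U$, and the $d$ stages are glued end to end. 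The total chain is then strictly increasing of order type $\omega^m\cdot d+1$, i.e.\ a long gap chain of length $\omega^m\cdot d$; the only thing to check is that each of the $d$ segments contributes exactly $\omega^m$ strict steps, which is inherited from the one–variable chain.

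It remains to produce the length-$\omega^m$ chain in $\m A^1$. The prototype is the ordinary case $m=1$: set $W_n=\{y:\delta^n y=0\}$. The differential ideal $[\delta^n y]\subseteq\mathbb Q\{y\}$ is prime (the quotient is the polynomial domain $\mathbb Q[y,\delta y,\ldots,\delta^{n-1}y]$), so each $W_n$ is irreducible over $\mathbb Q$, has Lascar rank exactly $n$, and $W_n\subsetneq W_{n+1}$; since every proper $\Delta$-subvariety of $\m A^1$ has finite Lascar rank while $\m A^1$ has rank $\omega$, the $\Delta$-closure of $\bigcup_n W_n$ must be $\m A^1$, giving $W_\omega=\m A^1$ at the top. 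For general $m$ one replaces these by solution sets of finite systems of linear homogeneous $\Delta$-equations with \emph{constant} coefficients: to a down-closed (staircase) subset $S\subseteq\m N^m$ with up-closed complement $I=\m N^m\setminus S$ one associates $W_S=\{y:\delta^\alpha y=0\text{ for all }\alpha\in I\}$, whose defining linear differential ideal is prime, so that $W_S$ is irreducible over $\mathbb Q$ and whose Kolchin polynomial records the counting function of $S$. Enlarging the staircase enlarges $W_S$, so nesting is automatic, and by letting the staircases grow through the relevant shapes one realizes every ordinal below $\omega^m$.

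The main obstacle is the exact rank bookkeeping in the partial case $m>1$, and this is where I would invest the effort. Two points must be pinned down. First, that the Lascar rank of $W_S$ equals the ordinal read off from its Kolchin polynomial $\sum_i a_i\binom{t+i}{i}$ as $\sum_i a_i\,\omega^i$; this is exactly the linear, constant-coefficient regime (that of $\Delta$-modules over the constants) in which Lascar rank and the Kolchin polynomial are known to agree, so that strict inclusions correspond to strictly increasing ordinals and every $\beta<\omega^m$ is realized. Second, the behaviour at limit stages, namely that the $\Delta$-closure of an increasing union of the $W_S$ again has the predicted rank, so that no level is skipped and the terminal closure is genuinely $\m A^1$ (of rank $\omega^m$). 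Granting the standard structure theory for the ordering of Kolchin dimension polynomials together with these rank computations, the staircases can be chosen so that $(W_\beta)_{\beta\le\omega^m}$ is strictly increasing of length exactly $\omega^m$; feeding this into the concatenation above completes the construction. The additivity of length across the $d$ coordinate–segments is clean because $\omega^m$ is additively indecomposable, so the $k$-th segment contributes the ranks $\omega^m\cdot k+\beta$ with $\beta<\omega^m$ and no collapse occurs.
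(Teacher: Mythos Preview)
Your proposal is correct and takes essentially the same approach as the paper: the Example immediately following the Fact constructs additive subgroups $G_r\subseteq(\m A^d,+)$ indexed by $r=(i,r_1,\dots,r_m)\in d\times\m N^m$ in the lexicographic order, defined by $x_{i+1}=\cdots=x_{d-1}=0$ together with a nested system of linear homogeneous equations on $x_i$ (namely $\delta_1^{r_1+1}x_i=0$, $\delta_2^{r_2+1}\delta_1^{r_1}x_i=0$, \dots), which is precisely your coordinate-by-coordinate concatenation with a specific choice of staircase in each $\m A^1$ factor. One simplification relative to your outline: the paper neither claims exact Lascar ranks nor takes closures at limit stages, since $d\times\m N^m$ with the lexicographic order already has order type $\omega^m\cdot d$ and the strict inclusions $G_r\subsetneq G_s$ for $r<s$ are checked directly from the defining equations, so your worries about limit behaviour are unnecessary.
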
 

The following is a specific example which establishes the previous fact. Moreover, this example produces a family of differential algebraic subgroups of the additive group such that for every $\alpha<\omega^m$ there is an element in the family whose Lascar rank is ``close'' to $\alpha$.

\begin{exam}\label{Affinepoint}
We produce a family $\{G_r: r\in n\times\NN^m\}$ of differential algebraic subgroups of the additive group $(\mathbb{A}^n,+)$ with the following properties. For every $r=(i,r_1,\dots,r_m)\in n\times \NN^m$,
$$\w^m i+\sum_{j=k}^m\w^{m-j}r_j \leq U(G_r) < \w^m i+\w^{m-k} (r_k+1)$$
where $k$ is the smallest such that $r_k>0$, and if $r,s\in n\times \NN^m$ are such that $r< s$, in the lexicographical order, then the containment $G_r\subset G_s$ is strict. Here $U(G_r)$ denotes the Lascar rank of $G_r$. We refer the reader to \cite{MMP} for definitions and basic properties of this model-theoretic rank.

For $r=(i,r_1,\dots,r_m)\in n\times \NN^m$, let $G_r$ be defined by the homogeneous system of linear differential equations in the $\D$-indeterminates $x_0,\dots,x_{n-1}$, 
\begin{displaymath}
\left\{
\begin{array}{c}
\d_1^{r_1+1}x_i=0,  \\
\d_2^{r_2+1}\d_1^{r_1}x_i=0,\\
\vdots\\
\d_{m-1}^{r_{m-1}+1}\d_{m-2}^{r_{m_2}}\cdots\d_1^{r_1}x_i=0, \\ \d_{m}^{r_m}\d_{m-1}^{r_{m-1}}\cdots\d_1^{r_1}x_i=0,
\end{array}
\right.
\end{displaymath}
together with 
$$x_{i+1}=0,\cdots, x_{n-1}=0.$$
Note that if $r,s\in n\times\NN^m$ are such that $r< s$, then $G_r\subset G_s$ is strict. We first show that $$U(G_r)\geq \w^m i +\sum_{j=1}^m\w^{m-j}r_j.$$ 
We prove this by transfinite induction on $r=(i,r_1, . . . , r_m)$ in the lexicographical order. The base case holds trivially. Suppose first that $r_m \neq 0$ (i.e., the succesor ordinal case). Consider the (definable) group homomorphism $f:(G_r,+)\to (G_r,+)$ given by $f(x_i)=\d_m^{r_m -1}x_i$. Then the generic type of the generic fibre of $f$ is a forking extension of the generic type of $G_r$. Since $f$ is a definable group homomorphism, the Lascar rank of the generic fibre is the same as the Lascar rank of $\operatorname{Ker}(f)=G_{r'}$, where $r'=(i,r_1,\dots,r_{m-1},r_m -1)$. By induction, 
$$U(G_{r'})\geq \w^m i+\sum_{j=1}^{m-1}\w^{m-j}r_j +(r_m-1).
$$ Hence, $$U(G_{r})\geq \w^m i+ \sum_{j=1}^m\w^{m-j}r_j.$$ 

Now suppose $r_m = 0$ (i.e., the limit ordinal case). Suppose there is $k$ such that $r_k \neq 0$ and that $k$ is the largest such. Let $\ell\in \w$ and $r' = (i,r_1,\dots,r_k -1,\ell,0,...,0)$. Then $G_{r'}\subset G_r$ and, by induction, 
$$U(G_{r'})\geq \w^m i+\sum_{j=1}^{k-1}\w^{m-j}r_j+\w^{m-k} (r_{k}-1)+\w^{m-k-1}\ell.$$ 
Since $\ell$ was arbitrary, $$U(G_r)\geq \w^m i+\sum_{j=1}^{k}\w^{m-j}r_j.$$ 
Finally suppose that all the $r_k$'s are zero and that $i>0$. Let $\ell\in \w$ and $r'=(i-1,\ell,0,\dots,0)$. Then again $G_{r'}\subseteq G_r$ and, by induction, 
$$U(G_{r'})\geq \w^m (i-1)+\w^{m-1}\ell.$$ 
Since $\ell$ was arbitrary, $$U(G_r)\geq \w^m i.$$ 
This completes the induction.

Let $k$ be the smallest such that $r_k>0$ and let $tp(a_0,\dots,a_{\ell-1}/K)$ be the generic type of $G_r$. We now show that if $i=0$, then $\D$-type$(G_r)=m-k$ and $\D$-dim$(G_r)=r_k$ (here $\Delta$-dim denotes the typical $\Delta$-dimension, see Chapter II of \cite{KolchinDAAG}). As $i=r_1=\cdots=r_{k-1}=0$, we have $a_1=\cdots=a_{n-1}=0$ and $\d_1 a_0=0,\,\dots,\,\d_{k-1} a_0=0,\, \d_k^{r_k+1}a_0=0$ and $\d_k^{r_k}a_0$ is $\D_{k}$-algebraic over $K$ where $\D_k=\{\d_{k+1},\dots,\d_m\}$. It suffices to show that $a_0,\d_k a_0,\dots, \d_k^{r_k-1}a_0$ are $\D_{k}$-algebraically independent over $K$. Let $f$ be a nonzero $\Delta_k$-polynomial over $K$ in the variables $x_0,\ldots,x_{r_k-1}$, and let $g(x)=f(x,\d_k x,\dots,\d_k^{r_k-1} x)\in K\{x\}$. Then $g$ is a nonzero $\D$-polynomial over $K$ reduced with respect to the defining $\D$-ideal of $G_r$ over $K$. Thus, as $a$ is a generic point of $G_r$ over $K$, $$0\neq g(a)=f(a,\d_k a,\dots,\d_k^{r_k-1} a),$$ as desired. Applying this, together with McGrail'
s \cite{McGrail} upper bounds for 
Lascar rank, we get $$U(G_r)<\w^{m-k}(r_k+1).$$ 
For arbitrary $i$, the above results show that $U(a_j/K)=\w^m$ for $j<i$, and $U(a_i/K)< \w^{m-k}(r_k+1)$ and $U(a_j/K)=0$ for $j>i$. Applying Lascar's inequality we get:
$$U(G_r)\leq U(a_0/K)\oplus\cdots\oplus U(a_{n-1}/K)<\w^m i+\w^{m-k}(r_k+1),$$
where $\oplus$ denotes the Cantor sum of ordinals. This proves the other inequality.
\end{exam}

\begin{rem}\label{remP} We now echo some remarks from \cite{PongCat}: 
\begin{enumerate} \item The catenary problem is essentially local; i.e, if you can find an ascending chain of irreducible differential subvarieties of an open set, then taking their closures results in an ascending chain in the given irreducible differential variety. 
\item As Pong \cite[page 759]{PongCat} points out, truncated versions of the differential coordinate rings of singular algebraic varieties do not satisfy the hypotheses of the classical going-down or going-up theorems with respect the ring embedding given by Noether normalization. (In \cite{PongCat}, this difficulty is avoided using resolution of singularities.)
\end{enumerate}
\end{rem} 

In light of Theorem \ref{downdownbaby}, Fact \ref{affinechain} and Remark \ref{remP}(1), one can see that the following question is a stronger version of the Kolchin catenary problem for algebraic varieties:

\begin{ques}\label{qeso}
Let $f: V \rightarrow \m A^d$, where $d=dim\, V$, be a finite open map of irreducible affine algebraic varieties. Then, if $f_\Delta$ denotes $f$ when regarded as a map of differential algebraic varieties, is $f_\Delta$ an open map?
\end{ques} 

In the next question, we will use the terminology of \cite{MSHasse}. When $V$ is an algebraic variety, we let $V_ \infty$ denote the inverse image of the prolongation spaces $\varprojlim _n \tau _ n (V)$ with respect to the appropriate finite free algebra corresponding to $\Delta$ \cite[Example 2.4]{MSHasse}. When $f: V \rightarrow W$ is a map of varieties, there is a naturally induced map $f_ \infty : V _\infty \rightarrow W _\infty$. 
The following question would yield, by quantifier elimination and Theorem \ref{downdownbaby}, a positive answer to Question \ref{qeso}: 

\begin{ques} Let $f: V \rightarrow \m A^d$, where $d=dim\, V$, be a finite open map of irreducible affine algebraic varieties. Let $f_\infty : V_\infty \rightarrow \m A^d _\infty$ be the induced map on their prolongation spaces. Is $f_\infty$ an open map? 
\end{ques} 

We do not know the answers to either of these questions in general; however, there is some evidence for the first one. We observe that at least, in the context of Question \ref{qeso}, the image of every $\Delta$-open set contains a $\Delta$-open set. Let $f$ and $f_\Delta$ be as in Question \ref{qeso}, and let $U$ be a $\Delta$-open subset of $V$. Then, the Lascar rank of $U$ is $\omega^m\cdot d$ (where $d=dim\, V$). It follows, from Lascar inequalities and the fact that $f_\Delta$ has finite fibres, that $f_\Delta(U)$ has Lascar rank $\omega^m\cdot d$. By quantifier elimination, $f_\Delta(U)$ is constructible and so it must contain a $\Delta$-open set. 

The second question does not appear to be answered in the literature on arc spaces, which is pertinent under the additional assumption that the variety $V$ is defined over the constants. The question can not be obviously answered by restricting ones attention to the finite level prolongation spaces even in the ordinary differential case for varieties defined over the constants (i.e., the case of arc spaces used in algebraic geometry). For instance, let $C$ be a curve and take $f: C \rightarrow \m A^1$ to be the finite open map given by Noether normalization. The induced map of tangent bundles of a cuspidal curve $Tf: TC \rightarrow \m A^2$ is not an open map; the extra component of the tangent bundle over the cusp gets mapped to a single point in $\m A^2$. 

\begin{rem}
Pong's solution \cite{PongCat} to the Kolchin catenary problem for algebraic varieties avoids the stronger forms we have given here. Instead of asking about the general going-down property (for differential ideals) for the map coming from Noether normalization, Pong uses resolution of singularities to reduce the question to smooth varieties. 
\end{rem}

\bibliography{research}{}
\bibliographystyle{plain}
\end{document}